\documentclass[12pt]{amsart}

\voffset=-1.4mm
\oddsidemargin=30pt \evensidemargin=17pt
\headheight=9pt     \topmargin=26pt
\textheight=576pt   \textwidth=420.8pt
\parskip=0pt plus 3.1pt

\usepackage{amssymb}
\usepackage{amsfonts}
\usepackage{amsthm}
\usepackage{graphicx}
\usepackage{color}
\usepackage{url}
\usepackage{microtype}
\usepackage{tikz}

\usepackage[symbol]{footmisc}

\usepackage{hyperref}

\definecolor{darkblue}{RGB}{0,0,160}
\hypersetup{
colorlinks,%
citecolor=black,%
filecolor=black,%
linkcolor=darkblue,%
urlcolor=darkblue
}

\usepackage[utf8]{inputenc}
\usepackage{todonotes}

\newtheorem{thm}{Theorem}[section]
\newtheorem{lemma}[thm]{Lemma}

\newtheorem{prop}[thm]{Proposition}
\newtheorem{conj}[thm]{Conjecture}

\theoremstyle{definition}
\newtheorem{example}[thm]{Example}
\newtheorem{remark}[thm]{Remark}
\newtheorem{defn}[thm]{Definition}
\newtheorem{conv}[thm]{Convention}

\numberwithin{equation}{section}



\newcommand{\ring}[1]{\ensuremath{\mathbb{#1}}}

\renewcommand{\>}{\rangle}

\newcommand\NN{\ring{N}}

\newcommand\RR{\ring{R}}

\newcommand\ZZ{\ring{Z}}

\newcommand\kk{\Bbbk}

\newcommand\pp{{\mathfrak p}}

\newcommand\xx{{\mathbf x}}
\newcommand\yy{{\mathbf y}}

\newcommand\cC{{\mathcal C}}

\newcommand\cG{{\mathcal G}}


\newcommand\spol{\mathrm{spol}}


\newcommand\iso{\cong}

\newcommand\til{\mathord\sim}



\DeclareMathOperator\charac{char} 
\DeclareMathOperator\diag{diag} 
\DeclareMathOperator\sat{sat} 
\DeclareMathOperator\im{im_\ZZ} 


\newcommand{\inD}[1][\relax]{\def\argone{#1}\def\temprelax{\relax}
  \ifx\argone\temprelax\right.\else\,\middle|#1\right.{}\fi}

\newcommand{\ideal}[1]{\left\langle #1 \right\rangle}

\newcommand{\lattice}{\mathcal{L}_G}
\renewcommand{\sat}[1]{\mathcal{J}_{#1}}
\newcommand{\pbei}[1]{\mathcal{I}_{#1}}
\newcommand{\saturation}{\pbei{G}:(\prod_{i\in\vertices{G}}x_iy_i)^\infty}
\newcommand{\gb}{\cG_\succ(G)}

\newcommand{\vertices}[1]{V(#1)}
\newcommand{\movesOdd}{\mathcal{M}^{\textnormal{odd}}_G}
\newcommand{\movesEven}{\mathcal{M}^{\textnormal{even}}_G}

\newcommand{\bComp}{c_0}
\newcommand{\nbComp}{c_1}
\newcommand{\cComp}{c}
\newcommand{\sComp}{\mathfrak{s}}
\newcommand{\nonbipPos}[1]{\mathfrak{p}^+(#1)}
\newcommand{\nonbipNeg}[1]{\mathfrak{p}^-(#1)}
\renewcommand{\subset}{\subseteq}
\renewcommand{\supset}{\supseteq}

\renewcommand{\int}{\mathrm{int}}



\begin{document}

\title{Parity binomial edge ideals}

\author{Thomas Kahle}
\address{Otto-von-Guericke Universität Magdeburg\\ Magdeburg, Germany} 
\urladdr{\url{http://www.thomas-kahle.de}}

\author{Camilo Sarmiento}
\address{Otto-von-Guericke Universität Magdeburg\\ Magdeburg, Germany} 
\urladdr{\url{http://www.uni-magdeburg.de/sarmient/}}

\author{Tobias Windisch}
\address{Otto-von-Guericke Universität Magdeburg\\ Magdeburg, Germany} 
\urladdr{\url{http://www.uni-magdeburg.de/windisch/}}

\date{\today}

\makeatletter
  \@namedef{subjclassname@2010}{\textup{2010} Mathematics Subject Classification}
\makeatother

\subjclass[2010]{Primary: 05E40; Secondary: 13P10, 05C38}


\keywords{Binomial ideals, primary decomposition, mesoprimary
decomposition, binomial edge ideals, Markov bases}

\begin{abstract}
Parity binomial edge ideals of simple undirected graphs are
introduced.  Unlike binomial edge ideals, they do not have square-free
Gröbner bases and are radical if only if the graph is bipartite or the
characteristic of the ground field is not two.  The minimal primes are
determined and shown to encode combinatorics of even and odd walks in
the graph.  A mesoprimary decomposition is determined and shown to be
a primary decomposition in characteristic two.
\end{abstract}

\maketitle

\setcounter{tocdepth}{1}
\tableofcontents

\section{Introduction}
A binomial is a polynomial with at most two terms and, a binomial ideal
is a polynomial ideal generated by binomials.  Binomial ideals appear
frequently in mathematics and also applications to statistics and
biology.  This paper is about decompositions of binomial ideals which
appear, for instance, in understanding the implications of conditional
independence statements
\cite[Chapter~3]{drton09:_lectur_algeb_statis}, steady states of
chemical reaction networks~\cite{MillanToricSteady,conradiKahle15}, or
combinatorial game theory~\cite{miller09:_theor,millerMisere13}.

Decomposition theory of binomial ideals started with Eisenbud and
Sturmfels' fundamental paper~\cite{es96}, which proves the existence
of binomial primary decomposition over algebraically closed fields.
It can be seen, however, that the field assumption is not strictly
necessary: a mesoprimary decomposition captures all combinatorial
features and exists over any given field~\cite{kahle11mesoprimary}.
Separating the arithmetical and combinatorial aspects of binomial
ideals is important for applications where binomial primary
decompositions over the complex numbers are often inadequate since
they obscure combinatorics and prevent interpretations of the
indeterminates as, say, probabilities or concentrations.

Actual primary decompositions have been computed almost exclusively of
radical ideals.  It is a general feature of (meso)primary
decomposition that the embedded primes and components remain elusive.
The partial decomposition of the Mayr-Meyer ideals by Swanson
illustrates quite beautifully the mess one typically encounters when
trying to determine components over embedded
primes~\cite{swanson04:_mayr_meyer_embedded}.  The minimal primes are
often combinatorially fixed and thus much better behaved.  For
instance, for lattice basis ideals they are entirely determined by the
indeterminates they contain~\cite{hosten00:_primar_lattice_basis}.
More examples of interesting combinatorial descriptions of minimal
primes of binomial ideals appear, for instance,
in~\cite{Herzog2010,hosten04:_adjacent_minors,kahle12:positive-margins}.
In practice, binomial (primary) decompositions can be found with
computer algebra.  For experimentation we used and recommend the
packages \textsc{Binomials}~\cite{kahle11:binom-jsag} and
\textsc{BinomialEdgeIdeals}~\cite{windisch15-bei} in
\textsc{Macaulay2}~\cite{M2}.

This paper is about a class of ideals whose primary decomposition
depends on the characteristic of the field and is in general different
from the mesoprimary decomposition.  We decompose these ideals using a
new technique and hope to add to the toolbox for binomial
decompositions.  To define the key player, let $G$ be a simple
undirected graph on $\vertices{G}$ and with edge set~$E(G)$.  Let
$\kk$ be any field and denote by
$\kk[\xx,\yy] = \kk[x_i,y_i : i\in\vertices{G}]$ the polynomial ring
in $2|\vertices{G}|$ indeterminates.
\begin{defn}\label{d:pbei}
The \emph{parity binomial edge ideal of $G$} is
\[
\pbei{G} := \ideal{x_ix_j - y_iy_j : \{i,j\} \in E(G)} \subset \kk[\xx,\yy].
\]
\end{defn}
Parity binomial edge ideals share a number of properties with binomial
edge ideals~\cite{Herzog2010}, but the combinatorics is subtler.
Various properties related to walks in~$G$ depend on whether the walk
has even or odd length (and hence the name).  If~$G$ is bipartite,
then everything reduces to the results of~\cite{Herzog2010} as
follows.

\begin{remark}\label{r:bipartite-bei}
Let $G$ be bipartite on the vertex set $V_1 \dot\cup V_2$.  Consider
the ring automorphism of $\kk[\xx,\yy]$ which
exchanges $x_i$ and $y_i$ if $i\in V_1$ and leaves all remaining
indeterminates invariant. Under this automorphism, $\pbei{G}$ is the
image of the binomial edge ideal of~$G$.
\end{remark}

Definition~\ref{d:pbei} was suggested by Rafael Villarreal at the
MOCCA Conference~2014 in Levico Terme.  He asked if parity binomial
edge ideals are radical. Theorem~\ref{t:radical}
combined with Remark~\ref{r:RadicalityOverFiniteField}
says that this is the case if and only if $G$ is bipartite, or
$\charac(\kk) \neq 2$.  We compute the minimal primes of $\pbei{G}$ in
Section~\ref{s:minprimes}.  In Proposition~\ref{p:Intersection}, we
write $\pbei{G}$ as an intersection of binomial ideals whose
combinatorics is simpler, since then a short induction shows that,
under the field assumption, all occurring intersections are radical
(Theorem~\ref{t:radical}) and hence $\pbei{G}$ is radical.  In
$\charac(\kk) = 2$ we determine a primary decomposition
(Theorem~\ref{t:charzwo}), which turns out to be also a mesoprimary
decomposition (Theorem~\ref{t:meso}).

Our determination of the minimal primes goes a route that is familiar
from~\cite{kahle12:positive-margins}.  We first determine generators
of the distinguished component $\saturation$ (that is, a Markov basis)
in Section~\ref{s:markov}.  Binomials $b$ that appear in the
Markov~basis but are not themselves contained in $\pbei{G}$ have the
property that $mb \in \pbei{G}$ for some monomial~$m$.  This means
that $\pbei{G}:b$ contains the monomial $m$ and thus some minimal
primes of $\pbei{G}$ contain the indeterminates that constitute~$m$.
In the case of parity binomial edge ideals, the witness monomial can
be found inductively using walks (Lemma~\ref{l:Walks}).

Just looking at Definition~\ref{d:pbei} one may hope that parity
binomial edge ideals~would deform to monomial edge ideals under the
Gröbner deformation. This is~not~the~case as already the simplest
examples show, but nevertheless, the lexicographic~Gröbner basis has
combinatorial structure and we describe it completely in
Section~\ref{s:GroebnerBasis}.

Shortly before first posting this paper on the arXiv, the authors
became aware of~\cite{herzog2014ideal}.  That paper contains a
different analysis of radicality of parity binomial edge ideals.  In
characteristic two, the parity binomial edge ideal $\pbei{G}$
coincides with the ideal~$L_G$ defined there; thus radicality is
clarified by their Theorem~1.2 which here appears as
Remark~\ref{r:RadicalityOverFiniteField}.  If the characteristic of
$\kk$ is not two, the linear transformation $x_i \mapsto x_i - y_i$,
$y_i \mapsto x_i + y_i$ maps the parity binomial edge ideal to the
permanental edge ideal $\Pi_G$ defined
in~\cite[Section~3]{herzog2014ideal}.  Radicality of this ideal is
clarified in their Corollary~3.3 by means of a Gröbner bases
calculation.  Our approach here is different and was developed
completely independently.  In particular, our proof of radicality
cannot use the Gröbner basis by Remark~\ref{r:nosqfreeGB}.
Additionally we can clarify the separation of combinatorics and
arithmetics of $\pbei{G}$ independent of $\charac(\kk)$ and determine
its mesoprimary decomposition.

\subsection*{Conventions and notation}
For $n\in\NN_{>0}$, let $[n] := \{1,\dots,n\}$.  All graphs here are
finite and simple, that is, they have no loops or multiple edges.  For
any graph~$G$, $\vertices{G}$ is the vertex set and $E(G)$ is the edge
set. For any $S\subset \vertices{G}$, $G[S]$ is the induced subgraph
on~$S$ and for a sequence of vertices
$P=(i_1,\ldots,i_r)\in\vertices{G}^r$, $G[P]:=G[\{i_1,\ldots,i_r\}]$.
Throughout we assume that $G$ is connected and in particular has no
isolated vertices if $|V(G)|\ge 2$.  According to
Definition~\ref{d:pbei}, if a graph is not connected then the parity
binomial edge ideals of the connected components live in polynomial
rings on disjoint sets of indeterminates such that the problem reduces
to connected graphs.  Despite this assumption, non-connected graphs
appear.  Thus, for any graph~$H$, let $\cComp(H)$ be the number
connected components, $\bComp(H)$ the number of bipartite connected
components, and $\nbComp(H)$ the number of connected components which
contain an odd cycle.  We freely identify ideals of sub-polynomial
rings of $\kk[\xx,\yy]$ with their images in~$\kk[\xx,\yy]$.
Likewise~ideals of $\kk[\xx,\yy]$ that do not use some of the
indeterminates are considered ideals of the~respective subrings.  A
binomial is \emph{pure difference} if it equals the difference of two
monomials.

\subsection*{Acknowledgements}
The authors would like to thank Rafael Villareal for posting the
question of radicality of parity binomial edge ideals.  We thank
Fatemeh Mohammadi for pointing us at~\cite{herzog2014ideal}.  The
authors appreciate the many comments and suggestions by Issac Burke
and Mourtadha Badiane.  T.K. and C.S. are supported by the Center for
Dynamical Systems (CDS) at Otto-von-Guericke University Magdeburg.
T.W.  is supported by the \href{https://www.studienstiftung.de}{German
National Academic Foundation} and
\href{https://www.ma.tum.de/TopMath/WebHomeEn}{TopMath}, a graduate
program of the \href{https://www.elitenetzwerk.bayern.de }{Elite
Network of Bavaria}.

\section{Markov bases}\label{s:markov}
Markov bases were first defined for toric ideals, but the definition
extends easily to other lattice ideals.  In this paper, by a Markov
basis we mean generators of $\saturation$, which is compatible with
the extended notions of Markov bases used in
\cite[Section~1.3]{drton09:_lectur_algeb_statis} and
\cite[Section~2.1]{rauh2014lifting}.

\begin{defn}
Let $G$ be a graph.  A $(v,w)$-\emph{walk} of \emph{length $r-1$} is a
sequence of vertices $v = i_1, i_2 \dots ,i_r = w$ such that
$\{i_k,i_{k+1}\}\in E(G)$ for all $k\in[r-1]$.  The walk is \emph{odd}
(\emph{even}) if
its length is odd (even).  A \emph{path} is walk that uses no vertex
twice.  A \emph{cycle} is a walk with $v=w$.  The \emph{interior} of a
$(v,w)$-walk $P=(i_1,\dots,i_r)$ is the set
$\int(P)=\{i_1,\dots,i_r\}\setminus\{v,w\}$.
\end{defn}

\begin{remark}
In this paper, a cycle is only defined with a marked start and end
vertex.  Consequently the interior of a cycle (in the usual graph
theoretic sense) also depends on the choice of this vertex.
\end{remark}

\begin{conv}\label{r:walkTalk}
When no ambiguity can arise, for instance because the vertices are
explicitly enumerated, we call a $(v,w)$-walk simply a walk. 
\end{conv}

\begin{figure}[bt]
	\begin{tikzpicture}[xscale=0.5,yscale=0.5]
		\node [label={[label distance=1pt]270:$4$},fill, circle, inner sep=2pt](a) at (0,0) {};
		\node [label={[label distance=1pt]90:$1$},fill, circle, inner sep=2pt](b) at (1,2) {};
		\node [label={[label distance=1pt]270:$3$},fill, circle, inner sep=2pt](c) at (2,0) {};
		\node [label={[label distance=1pt]270:$6$},fill, circle, inner sep=2pt](d) at (4,0) {};
		\node [label={[label distance=1pt]270:$5$},fill, circle, inner sep=2pt](e) at (6,0) {};
		\node [label={[label distance=1pt]90:$2$},fill, circle, inner sep=2pt](f) at (3,2){};
		\draw(a)--(c) --(d) --(e);
		\draw(c) -- (b) -- (f) -- (c);
	\end{tikzpicture}
	\caption{\label{fig:RunningExample} A graph with an even walk,
	but no even path from~$4$ to~$5$.  The interior of the walk
	$(4,3,1,2,3,6,5)$ is~$\{1,2,3,6\}$.  }
\end{figure}
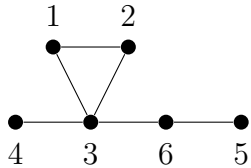

\begin{lemma}\label{l:Walks}
Let $P$ be an $(i,j)$-walk in $G$ and for $k\in\int(P)$, let
$t_k\in\{x_k,y_k\}$ arbitrary. If $P$ is odd, then
\begin{equation*}\label{equ:OddWalks}
(x_ix_j-y_iy_j)\prod_{k\in\int(P)}t_k\in\pbei{G}.
\end{equation*}
If $P$ is even, then
\begin{equation*}\label{equ:EvenWalks}
(x_iy_j-y_ix_j)\prod_{k\in\int(P)}t_k\in\pbei{G}.
\end{equation*}
\end{lemma}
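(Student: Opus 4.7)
My approach is induction on the length $r$ of the walk $P=(i_1,\dots,i_{r+1})$. The base case $r=1$ is just the defining relation: the walk is a single edge, the interior is empty, and the odd-case binomial $x_ix_j-y_iy_j$ is a generator of $\pbei{G}$ by Definition~\ref{d:pbei}. (The even case with $r=0$, if needed, is vacuous since $x_iy_i-y_ix_i=0$; and I read the typo in the statement as $x_iy_j-y_ix_j$.)

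For the inductive step ($r\ge 2$), set $m=i_r$ and consider the sub-walk $P'=(i_1,\dots,i_r)$ of length $r-1$ from $i$ to $m$, whose parity is opposite to that of $P$. By induction, the conclusion holds for $P'$. The algebraic engine is a handful of polynomial identities obtained by direct computation, for instance
\[
y_m(x_ix_j-y_iy_j)=x_j(x_iy_m-y_ix_m)+y_i(x_mx_j-y_my_j),
\]
together with its companion obtained by swapping $x_m\leftrightarrow y_m$ and $y_j\leftrightarrow x_j,\ y_i\leftrightarrow x_i$, plus two analogous identities (derived by interchanging the roles of $x$ and $y$ on $j$) that handle the even case. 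Each identity expresses $t_m$ times the target binomial as a combination of (i) the inductive-hypothesis binomial for $P'$ and (ii) the edge generator $x_mx_j-y_my_j\in\pbei{G}$. Multiplying by $\prod_{k\in\int(P')}t_k$ and invoking the inductive hypothesis therefore places $t_m(x_ix_j-y_iy_j)\prod_{k\in\int(P')}t_k$ in $\pbei{G}$ for either choice of $t_m\in\{x_m,y_m\}$.

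The remaining bookkeeping is to match $t_m\prod_{k\in\int(P')}t_k$ with the required $\prod_{k\in\int(P)}t_k$; since $\int(P)$ is a set, each vertex should be hit once regardless of how often the walk visits it. In the generic situation where $m\neq i$, $j\notin V(P')$, and $m\notin\int(P')$, one has $\int(P)=\int(P')\sqcup\{m\}$ and the extension yields exactly the desired element. When the walk revisits vertices I would short-circuit $P$ at a repeated vertex to obtain a shorter walk with interior contained in $\int(P)$ and supply the missing $t_v$ factors by multiplication; in the bipartite case or whenever the short-circuit gap is even, the parity is preserved automatically.

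The main obstacle I expect is the edge case $m=i$, in which $P'$ is a closed walk at $i$ and the inductive hypothesis returns a binomial of the form $x_i^2-y_i^2$ rather than a pure-difference in two distinct vertex-sets. In this situation peeling the last edge alone does not cleanly produce the target; one should instead peel the \emph{first} edge of $P$ (so that the IH is applied to a sub-walk whose endpoint-binomial is again of pure-difference type) or split the walk at the repeated occurrence of $i$ and apply induction twice, then verify that the same family of identities propagates the conclusion through. Getting that case analysis tight, while keeping track of which vertices genuinely belong to $\int(P)$ as a set, is where the work lies.
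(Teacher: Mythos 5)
Your induction is correct in outline and is a close cousin of, but not identical to, the paper's argument. The paper splits $P$ at an \emph{arbitrary} interior vertex $s\in\int(P)$ (so $s\neq i,j$ and both pieces are honest $(i,s)$- and $(s,j)$-walks), applies the induction hypothesis to \emph{both} subwalks, and rewrites the target monomial in two congruence steps modulo $\pbei{G}$, the order of the two replacements being dictated by whether $t_s=x_s$ or $t_s=y_s$; the ``arbitrary $t_k$'' freedom in the hypothesis is what absorbs revisited vertices (the subwalks may traverse $j$ resp.\ $i$). You instead always split at the terminal edge, so you need the hypothesis only once, the second ingredient being the actual generator $x_mx_j-y_my_j$, packaged into explicit identities; your displayed identity is correct, and valid companions exist in all four cases (e.g.\ $x_m(x_ix_j-y_iy_j)=x_i(x_mx_j-y_my_j)+y_j(x_iy_m-y_ix_m)$ — note your ``swap'' recipe as literally stated would turn the edge generator into a non-generator, so write the companions out). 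Your bookkeeping worry about revisits is resolved by the same mechanism as in the paper, with no short-circuiting needed: since $\int(P')\subseteq(\int(P)\setminus\{m\})\cup\{j\}$ and the identity coefficient supplies exactly one of $x_j,y_j$, you can choose the $t'_k$ in the inductive hypothesis to match whatever monomial is available. Your parity discussion of ``short-circuit gaps'' is a red herring.

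The one case you leave open in fact dissolves in a line, and you should close it rather than restructure the induction: if $m=i_r=i$ and $P$ is odd, then the last edge is $\{i_r,i_{r+1}\}=\{i,j\}\in E(G)$, so $x_ix_j-y_iy_j$ is itself a generator of $\pbei{G}$ and the target binomial is trivially a monomial multiple of it — no induction hypothesis is needed at all (symmetrically for $i_2=j$ under first-edge peeling). If instead $P$ is even and $m=i$, the hypothesis for the odd closed walk $P'$ returns $x_i^2-y_i^2$ (cf.\ Remark~\ref{r:oddCycle}) and the identity $x_i(x_iy_j-y_ix_j)=y_j(x_i^2-y_i^2)-y_i(x_ix_j-y_iy_j)$ carries the step through. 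This is the analogue of the paper's separate dispatch of walks with $\int(P)=\emptyset$, which are monomial multiples of generators; the paper's choice of an \emph{interior} split vertex is precisely what lets it avoid your degenerate case altogether.
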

\begin{proof}
We prove the statement by induction on the length $r$ of $P$. If
$r=1$, the statement is true by definition, thus assume that $r>1$. If
$\int(P)=\emptyset$, then $P$ is odd, $i$ is adjacent to $j$, and the
claim holds trivially.  If $\int(P)\neq\emptyset$, pick a vertex
$s\in\int(P)$.  Consider first the case that $P$ is an odd walk.
Exchanging the roles of~$i$ and $j$ if necessary, we can assume that
the $(i,s)$-subwalk of $P$ is odd and that the $(s,j)$-subwalk is
even.  Using the induction hypothesis, the binomials corresponding to
these walks are in $\pbei{G}$. Now, if $t_s=x_s$, then
\begin{equation*}
x_ix_sx_j\prod_{k\in\int(P)\setminus s} t_k
\equiv_{\pbei{G}}y_iy_sx_j \prod_{k\in\int(P)\setminus s} t_k
\equiv_{\pbei{G}}y_ix_sy_j \prod_{k\in\int(P)\setminus s} t_k
\end{equation*}
where we have first applied a binomial corresponding to the odd
$(i,s)$-subwalk (which may traverse~$j$) and then a binomial
corresponding to the even $(s,j)$-subwalk of $P$ (which may
traverse~$i$). If $t_s=y_s$, then we first apply the $(s,j)$-walk and
then the $(i,s)$-walk. The induction step for an even walk is similar
and omitted.
\end{proof}

\begin{remark}\label{r:oddCycle}
Lemma~\ref{l:Walks} also holds for odd cycles in which case we get
that monomial multiples of $x_i^2-y_i^2$ are contained in $\pbei{G}$
for any vertex $i$ that is contained in the same connected component
as an odd cycle.
\end{remark}

Let $\{i,j\}\in E(G)$ and denote
$m_{\{i,j\}}:=e_i+e_j\in\ZZ^{\vertices{G}}$ where $e_i$ is the
standard unit vector in~$\ZZ^{\vertices{G}}$ corresponding to
$i\in\vertices{G}$.  With this notation, the generator $x_ix_j-y_iy_j$
has exponent vector
$(m_{\{i,j\}},-m_{\{i,j\}})^T \in \ZZ^{2|\vertices{G}|}$.  The
exponent vectors of generators of $\pbei{G}$ generate a lattice
\begin{equation*}
\lattice= \ZZ \left\{ \begin{pmatrix}m_e\\-m_e\end{pmatrix} : e \in
E(G) \right\}= \im \begin{pmatrix}A_G\\-A_G\end{pmatrix} \subset
\ZZ^{2|\vertices{G}|},
\end{equation*}
where $A_G$ is the incidence matrix of~$G$.  Consequently $\lattice$
is the Lawrence lifting of~$\im (A_G) \subset \ZZ^n$.  Recall that a
Graver basis of a lattice is the unique minimal subset of the lattice
such that each element of the lattice is a sign-consistent linear
combination of elements of the Graver basis (see
\cite[Chapter~3]{Loera2013} for Graver basics).  A standard fact about
Lawrence liftings is that the Graver basis of $\im (A_G)$ can be
lifted to a Graver basis of $\lattice$, which here equals the
universal Gröbner basis and any minimal Markov basis of~$\lattice$
\cite[Proposition~1.1]{Bayer1999Lawrence}.  To determine the Graver
basis of~$\im(A_G)$, let
\begin{equation*}
	\begin{split}
	\movesOdd  & :=\{e_i+e_j: \text{ there is an odd $(i,j)$-walk in $G$}\}\\
	\movesEven & :=\{e_i-e_j: \text{ there is an even $(i,j)$-walk in $G$}\} \setminus \{0\}.
	\end{split}
\end{equation*}
Note in particular that if there is an odd $(i,i)$-walk, then
$2\cdot e_i\in\movesOdd$.

\begin{prop}\label{p:graver}
The Graver basis of $\im(A_G)$ is $\pm(\movesOdd\cup\movesEven)$.
\end{prop}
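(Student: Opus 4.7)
The plan is to prove both inclusions, basing everything on a single preliminary observation: since every generator $m_e = e_i+e_j$ of $\im(A_G)$ has coordinate sum $2$, the lattice $\im(A_G)$ is contained in the parity-even sublattice of $\ZZ^{\vertices{G}}$, so in particular $\pm e_k\notin\im(A_G)$ for any vertex $k$. The inclusion $\pm(\movesOdd\cup\movesEven)\subseteq$ (Graver basis) is then immediate: every such $v$ has $\|v\|_1=2$, so a sign-consistent splitting $v=u+w$ with $u,w\neq 0$ would force $u,w\in\{\pm e_k\}$, contradicting the parity observation.

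For the reverse inclusion, I take a nonzero Graver element $v\in\im(A_G)$ and aim to show $\|v\|_1=2$; once this is known, the parity condition together with the walk characterizations of $\movesOdd$ and $\movesEven$ identifies $v$ as an element of $\pm(\movesOdd\cup\movesEven)$. If $\supp v$ meets two distinct connected components, the restrictions of $v$ to each component are sign-consistent, nonzero, and lie in $\im(A_G)$, contradicting Graverness. So we may assume $\supp v$ lies in a single connected component $H$. The overall strategy is then to exhibit, whenever $\|v\|_1\ge 4$, a proper sign-consistent decomposition of $v$ into elements of $\im(A_G)$.

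When $v\ge 0$, I would find $i,j\in\supp v$ (allowing $i=j$ if $v_i\ge 2$) with $e_i+e_j\in\im(A_G)$, yielding the decomposition $v=(e_i+e_j)+(v-e_i-e_j)$ with both summands nonzero. If $H$ is non-bipartite, any $i,j$ work because concatenation with an odd closed walk produces odd walks between any two vertices. If $H$ is bipartite with parts $V_1,V_2$, the identity $\sum_{V_1}v_k=\sum_{V_2}v_k$, inherited from the generators, equals $\|v\|_1/2\ge 2$, so both parts meet $\supp v$; taking $i\in V_1\cap\supp v$ and $j\in V_2\cap\supp v$ produces the desired odd $(i,j)$-walk.

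The mixed-sign case, where $v$ has both positive and negative entries, is the main obstacle and requires an analogous but subtler argument. Writing $I^+:=\supp v^+$ and $I^-:=\supp v^-$, I would split off $e_i-e_j\in\movesEven$ for $i\in I^+$ and $j\in I^-$ joined by an even walk. When $H$ is non-bipartite, any $i\neq j$ works. When $H$ is bipartite, the balancing identity $\sum_{V_1}v_k=\sum_{V_2}v_k$ forbids the configurations $I^+\subseteq V_1,\,I^-\subseteq V_2$ and its mirror, because one side would be strictly positive and the other strictly negative; hence some part of the bipartition contains both a positive and a negative coordinate of $v$, and $i,j$ can be chosen there. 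The complementary summand $v-e_i+e_j$ is sign-consistent with $v$, lies in $\im(A_G)$, and is nonzero as long as $\|v\|_1>2$. Together these cases force $\|v\|_1=2$, and the description of $v$ follows as above.
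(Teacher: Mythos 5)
Your proof is correct, but it takes a genuinely different route from the paper. The paper's proof is a two-line application of Pottier's termination criterion \cite[Algorithm~3.3]{Loera2013}: one only checks that the sum of any two elements of $\pm(\movesOdd\cup\movesEven)$ reduces sign-consistently to zero, the sole nontrivial case being a single cancelation, where the sum equals $\pm(e_{i_1}\pm e_{j_2})$ and is again a move by concatenating the two walks. You instead verify the definition directly: primitivity of each move follows from your parity observation (every element of $\im(A_G)$ has even coordinate sum, so $\pm e_k$ never lies in the lattice), and completeness follows by showing any lattice element $v$ with $\|v\|_1\ge 4$ splits off a move sign-consistently, via a case analysis on the sign pattern of $v$ and on whether its (necessarily single) supporting component is bipartite, using the balance identity $\sum_{V_1}v_k=\sum_{V_2}v_k$ in the bipartite case. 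What the paper's approach buys is brevity, at the cost of invoking the completion machinery and the fact that the moves generate the lattice; what yours buys is a self-contained argument that additionally pins down $\im(A_G)$ componentwise (the even-sum lattice for non-bipartite components, the balanced lattice for bipartite ones) and constructively shows every lattice vector is a sign-consistent sum of moves. Two points you should spell out, though neither is a real gap: the case $v\le 0$ is handled by central symmetry of the lattice and of the Graver property, and your final identification step needs the converse membership statements (e.g.\ $e_i+e_j\in\im(A_G)$ forces an odd $(i,j)$-walk), which follow from the same parity and balance facts you already established --- in a bipartite component, balance forces $i,j$ into opposite parts (and excludes $2e_i$), while in a non-bipartite component walks of both parities exist between any two vertices.
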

\begin{proof}
According to Pottier's termination criterion \cite[Algorithm
3.3]{Loera2013}, it suffices to check that the sum of two elements of
$\pm(\movesOdd\cup\movesEven)$ can be reduced to zero
sign-consistently.  
If there are no cancelations in the sum, for example if the two
summands have disjoint support, the sum is reduced by either of the
summands.  Cancelation among elements $e_{i_1} \pm e_{i_2}$ and
$e_{j_1} \pm e_{j_2}$ can only occur if
$|\{i_1,i_2,j_1,j_2\}| \leq 3$.  Without loss of generality assume
$i_2 = j_1$.  Thus, if cancelation occurs, the sum of two proposed
Graver elements must equal $\pm(e_{i_1} \pm e_{j_2})$ and this is
either zero or another element in $\pm (\movesOdd\cup\movesEven)$ by
concatenation of walks.
\end{proof}

Proposition~\ref{p:graver} shows that the minimal Markov, or
equivalently Graver, basis of the ideal saturation
$\sat{G}:=\saturation$ at the coordinate hyperplanes consists of the
following binomials:

\begin{prop}\label{p:markov}
\begin{equation}\label{eq:markovBasis}
\begin{split}
\sat{G} = & \ideal{x_ix_j-y_iy_j: \text{ there is an odd $(i,j)$-walk
in $G$}} \\
+ & \ideal{x_iy_j-y_jx_i: \text{ there is an even $(i,j)$-walk in $G$} }.
\end{split}
\end{equation}
\end{prop}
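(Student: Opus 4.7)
The plan is to combine Proposition~\ref{p:graver} with the Lawrence lifting structure identified just before it, and invoke the standard fact that, for a pure difference binomial ideal whose generators' exponents span a sublattice $L$, the saturation at the product of all indeterminates equals the lattice ideal~$I_L$. Since $\sat{G}$ is defined as the saturation of $\pbei{G}$ at $\prod_{i \in \vertices{G}} x_i y_i$, this identifies $\sat{G}$ with~$I_\lattice$, and it suffices to produce a minimal Markov basis of~$\lattice$.

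Next, I would invoke \cite[Proposition~1.1]{Bayer1999Lawrence}: because $\lattice$ is the Lawrence lifting of $\im(A_G)$, its Graver basis, its universal Gröbner basis, and every minimal Markov basis all coincide, and they are obtained by lifting the Graver basis of~$\im(A_G)$. Each Graver element $v\in \im(A_G)$ lifts to $(v,-v)\in\lattice$, which in binomial form is the pure difference binomial with positive part $\xx^{v^+}\yy^{v^-}$ and negative part $\xx^{v^-}\yy^{v^+}$.

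Applying this to the description of the Graver basis from Proposition~\ref{p:graver} as $\pm(\movesOdd\cup\movesEven)$, I would read off the binomials one move class at a time. A move $e_i+e_j\in\movesOdd$ (allowing $i=j$ when an odd closed walk at $i$ is present, so that $2e_i\in\movesOdd$) lifts to $x_ix_j - y_iy_j$, giving exactly the binomials in the first summand of~\eqref{eq:markovBasis}. A move $e_i-e_j\in\movesEven$ lifts to $x_iy_j - x_jy_i$, giving the binomials in the second summand (the printed generator should be read this way; $x_iy_j-y_jx_i$ is formally zero).

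No serious obstacle is expected: the argument is a direct translation, and both ingredients (the Lawrence lifting theorem and the saturation-equals-lattice-ideal fact for pure difference binomial ideals) are standard. The only place where a bit of care is needed is the degenerate case $i=j$ in $\movesOdd$ arising from odd closed walks; here the lifted binomial is $x_i^2-y_i^2$, which is consistent with Remark~\ref{r:oddCycle} and is correctly captured by writing ``odd $(i,j)$-walk'' with $i=j$ allowed.
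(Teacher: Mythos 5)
Your proposal is correct and takes essentially the same approach as the paper, whose entire proof is the citation of Proposition~\ref{p:graver} together with \cite[Proposition~1.1]{Bayer1999Lawrence}; you have merely made explicit the standard ingredients the paper leaves implicit (saturation of a pure difference binomial ideal equals the lattice ideal of the exponent lattice, and the lifting $v \mapsto (v,-v)$ translating Graver moves into binomials). Your reading of the second summand's generator as $x_iy_j - x_jy_i$ (the printed $x_iy_j - y_jx_i$ being a typo) and your handling of $2e_i \in \movesOdd$ via $x_i^2 - y_i^2$ are both consistent with the paper's usage elsewhere, e.g.\ in Remark~\ref{r:oddCycle} and the proof of Proposition~\ref{p:decompSat}.
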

\begin{proof}
This is Proposition \ref{p:graver} and
\cite[Proposition~1.1]{Bayer1999Lawrence}.
\end{proof}

\begin{example}
Due to the odd cycle in the graph $G$ in
Figure~\ref{fig:RunningExample}, for all pairs $(i,j)$ of vertices
with~$i\neq j$, both~$x_ix_j - y_iy_j$ and $x_iy_j - x_jy_i$ are
contained in~$\sat{G}$.  Hence, the ideal $\sat{G}$ has $15$
generators for odd walks and $15$ for even walks with disjoint
endpoints. Since $G$ is not bipartite, $x_i^2-y_i^2\in\sat{G}$ for all
$i\in[6]$. In total, a minimal Markov basis of $\sat{G}$ consists of
$36$ generators.
\end{example}

\begin{remark}
If $G$ is bipartite, the reachability of vertices with even or odd
walks is determined by membership in the two groups of vertices.
Consequently, for each spanning tree $T\subset G$ we have
$\sat{T}=\sat{G}$.  This is not true if $G$ has an odd cycle.
\end{remark}

\section{A lexicographic Gröbner basis}\label{s:GroebnerBasis}

For this section, an ordering of $\vertices{G}$ is necessary.  Fix any
labeling $\vertices{G}\iso [n]$ and let $\succ$ be the lexicographic
ordering on $\kk[\xx,\yy]$ induced by $x_{1}\succ\cdots\succ
x_{n}\succ y_{1}\succ\cdots\succ y_{n}$.  For $i,j\in\vertices{G}$
write $i\succ j$ if $x_{i}\succ x_{j}$. 
We now describe the lexicographic Gröbner basis of the parity binomial
edge ideal. This Gröbner basis reduces the binomial of an $(i,j)$-walk
$P$ by the binomial of an $(i,k)$-walk or that of a $(k,j)$-walk for a
suitable $k\in\int(P)$. For example, if $P$ is odd and if $i\succ
j\succ k$, then a binomial of $P$ can be reduced to zero by the
binomials corresponding to two subwalks $(i,k)$ and $(k,j)$ in $G[P]$,
independently of their parity and of the chosen binomial of~$P$.
The following definition identifies the configurations that lead to
irreducible walk~binomials. 

\begin{defn}\label{d:ReducedWalks}
Let $P$ be an odd
$(i,j)$-walk with $i\succeq j$ and for $k\in\int(P)$, let
$t_k\in\{x_k,y_k\}$ be arbitrary.  The binomial
\begin{equation}\label{eq:oddReducedWalk}
(x_ix_j-y_iy_j)\prod_{k\in\int(P)}t_k
\end{equation}
is \emph{reduced} if for all $k\in\int(P)$
\begin{itemize}
\item there is no odd $(i,j)$-walk in $G[P\setminus\{k\}]$,
\item $k\succ j$,
\item if $i\succ k \succ j$, then all $(i,k)$-walks in $G[P]$ are 
odd and $t_k=y_k$, and
\item if $k\succ i\succ j$, then $t_k=y_k$.
\end{itemize}

Let $P$ be an even $(i,j)$-walk with $i\succ j$, and for
$k\in\int(P)$, let $t_k\in\{x_k,y_k\}$ be arbitrary.  The binomial
\begin{equation}\label{eq:evenReducedWalk}
(x_iy_j-y_ix_j)\prod_{k\in\int(P)}t_k
\end{equation}
is \emph{reduced} if for all $k\in\int(P)$
\begin{itemize}
\item there is no even $(i,j)$-walk in $G[P\setminus\{k\}]$,
\item if $i\succ j\succ k$, then all $(i,k)$-walks in $G[P]$ are
either odd and $t_k=y_k$ or they are all even and $t_k=x_k$,
\item if $i\succ k \succ j$, then all $(i,k)$-walks in $G[P]$ are odd
and $t_k=y_k$, and
\item if $k\succ i\succ j$, then $t_k=y_k$.
\end{itemize}
The set of reduced binomials is written $\gb$. 
\end{defn}

Clearly, $x_ix_j-y_iy_j\in\gb$ for every edge $\{i,j\}\in E(G)$. We
make the reduced binomials more explicit as follows.

\begin{remark}\label{r:writeExplicit}
Let $i\succeq j$ and let $P$ be an $(i,j)$-walk in $G$ with
$\int(P)=\{i_1,\dots,i_r\}$.  Assume that there exists variables
$t_{k}\in\{x_{k},y_{k}\}$, $k\in\int(P)$, such that the respective
binomial in equation \eqref{eq:oddReducedWalk} or
\eqref{eq:evenReducedWalk} is reduced. The case distinction in
Definition~\ref{d:ReducedWalks} fixes the value of $t_k$ for any
$k\in\int(P)$ as follows. Let
\begin{equation*}
P^x:=\{k\in\int(P): j\succ k\text{ and there is an even $(i,k)$-walk
in $G[P]$}\}
\end{equation*}
and $P^y:=\int(P)\setminus P^x$.  In particular, $P^y=\int(P)$ if $P$
is odd.  Thus,
\begin{equation*}
\prod_{k\in\int(P)}t_k=\prod_{k\in P^x}x_k\prod_{k\in P^y}y_k.
\end{equation*}
\end{remark}

\begin{example}\label{e:evenReducedWalks}
Let $P$ be an even $(i,j)$-walk with $i\succ j$ such that there exists
$k\in\int(P)$ with $j\succ k$ and such that there exists an even and
an odd $(i,k)$-walk in $G[P]$.  If $t_k=x_k$ in equation
\eqref{eq:evenReducedWalk}, then the binomial can be reduced by the
binomial corresponding to the odd $(i,k)$-walk.  If $t_k=y_k$, then
the binomial can be reduced to zero by the binomial corresponding to
the even $(i,k)$-walk.
\end{example}

\begin{example}\label{ex:ReducedOddWalksCanHaveCycles}
Consider the even walk $(4,3,1,2,3,6)$ in the parity
binomial edge ideal for Figure~\ref{fig:RunningExample}. The
binomial~$(x_4x_6-y_4y_6)y_3y_2y_1$ is reduced, whereas the binomial
$(x_4x_6-y_4y_6)y_3x_2y_1$ is not.  In particular, reduced odd walks
can have odd cycles. In the even $(4,5)$-walk $P=(4,3,1,2,3,6,5)$
there exists an even $(4,6)$-subwalk and an odd $(4,6)$-subwalk in
$G[P]$. Since $4\succ 6$, no choice of variables $t_k\in\{x_k,y_k\}$
makes the binomial $(x_4y_5-y_4x_5)t_1t_2t_3t_4t_6$ a reduced
binomial.
\end{example}

The first step is to see that reduced binomials have minimal leading
terms among all binomials in Lemma~\ref{l:Walks} corresponding to
walks, justifying their name.
\begin{lemma}\label{l:ReductionOfWalkBinomials}
Let $P$ be an $(i,j)$-walk and $t_k\in\{x_k,y_k\}$ for $k\in\int(P)$
arbitrary. Then, $(x_ix_j-y_iy_j)\prod_{k\in\int(P)}t_k$ if $P$ is
odd, and $(x_iy_j-y_ix_j)\prod_{k\in\int(P)}t_k$ if $P$ is even,
reduce to zero modulo~$\gb$.
\end{lemma}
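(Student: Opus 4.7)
The plan is to induct on the length $r$ of the walk $P$. The base case $r=1$ is immediate: $P$ is a single edge, the walk binomial is $x_ix_j-y_iy_j$, which belongs to $\gb$ by Definition~\ref{d:ReducedWalks}, and a single division step reduces it to zero. For the inductive step, suppose the claim holds for every walk shorter than $P$, and let $b$ denote the walk binomial associated to $P$ and the chosen $t_k$'s. If $b\in\gb$, one division step finishes. Otherwise, some $k\in\int(P)$ violates one of the bullet conditions in Definition~\ref{d:ReducedWalks}, and two cases arise.

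In the \emph{shortcut} case (first bullet fails), a same-parity $(i,j)$-walk $P'$ lies inside $G[P\setminus\{k\}]$, so $\int(P')\subseteq\int(P)\setminus\{k\}$, and $b$ factors as a monomial times the walk binomial on $P'$ with the interior choices inherited from $P$; induction then gives the reduction. In the remaining case, split $P$ at $k$ into an $(i,k)$-subwalk $P_1$ and a $(k,j)$-subwalk $P_2$. A routine computation produces, in the odd case with $P_1$ odd and $P_2$ even, the polynomial identities
\begin{align*}
(x_ix_j-y_iy_j)\,x_k &= x_j(x_ix_k-y_iy_k) + y_i(x_jy_k-y_jx_k), \\
(x_ix_j-y_iy_j)\,y_k &= y_j(x_ix_k-y_iy_k) - x_i(x_ky_j-y_kx_j);
\end{align*}
analogous identities hold for the three remaining parity combinations of $(P_1,P_2)$, each time expressing $(x_ix_j-y_iy_j)\,t_k$ or $(x_iy_j-y_ix_j)\,t_k$ as a $\kk[\xx,\yy]$-linear combination of an edge-type binomial on $P_1$ and one on $P_2$. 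Multiplying the identity matching $t_k$ by the monomial $\prod_{k'\in\int(P)\setminus\{k\}}t_{k'}$ and distributing the $t_{k'}$'s to the two factors exhibits $b$ as a $\kk[\xx,\yy]$-linear combination of walk binomials on the strictly shorter walks $P_1$ and $P_2$, each of which reduces to zero by the induction hypothesis.

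The main obstacle is ensuring that the combination above is a valid standard representation with respect to $\succ$, so that genuine polynomial division modulo $\gb$ terminates at zero (not merely that $b$ lies in the ideal abstractly). The precise constraints on $t_k$ in Definition~\ref{d:ReducedWalks} are calibrated exactly so that when a bullet condition fails, the identity invoked has every summand with leading monomial at most that of $b$; Examples~\ref{e:evenReducedWalks} and~\ref{ex:ReducedOddWalksCanHaveCycles} illustrate how the rigid choice of $t_k$ forces the correct reduction direction and rules out representations whose leading monomials would exceed $b$.
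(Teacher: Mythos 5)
Your proposal is correct and takes essentially the same route as the paper's proof: the same induction on walk length, with a failure of the first bullet handled as a monomial multiple of a shorter walk binomial, and any other violation handled by splitting at the offending interior vertex $k$ and rewriting via the binomials of the two $(i,k)$- and $(k,j)$-subwalks, whose leading-term compatibility is exactly what Definition~\ref{d:ReducedWalks} is calibrated for (the paper points to Example~\ref{e:evenReducedWalks} at precisely this step, as you do). Your explicit polynomial identities and the closing remark about standard representations merely spell out details the paper's terser argument leaves implicit.
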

\begin{proof}
This is an induction on the length of $P$. Assume that the
binomial contradicts the first bullet in the respective definition of
being reduced, then it is a monomial multiple of a binomial of a
shorter walk which can be reduced to zero by induction.  If the
binomial fulfills the first bullet, then there exists some
$k\in\int(P)$ that violates one of the other properties in the
definition.  In this case, there exists two subwalks $(i,k)$ and
$(k,j)$ whose binomials reduce the original binomial (see
Example~\ref{e:evenReducedWalks}), and which are
themselves reducible by the induction hypothesis.
\end{proof}

We now state the main theorem of this section.  Its proof is by
Buchberger's criterion and splits into a couple of lemmas.
\begin{thm}\label{thm:GB}
The set $\gb$ of reduced binomials is the reduced Gröbner basis of
$\pbei{G}$ with respect to~$\succ$.
\end{thm}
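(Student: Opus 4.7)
The plan is to apply Buchberger's criterion to $\gb$. First, by Lemma~\ref{l:Walks} every reduced binomial lies in $\pbei{G}$, and each edge binomial $x_ix_j - y_iy_j$ is itself the reduced binomial of the length-one walk $(i,j)$ with empty interior, so $\gb$ generates $\pbei{G}$. Under the chosen lex order, in which $x_k \succ y_k$ for every $k$, the initial monomial of the odd-walk binomial in~\eqref{eq:oddReducedWalk} is $x_ix_j\prod_{k\in\int(P)}y_k$ (since $P^x=\emptyset$ for odd walks), and the initial monomial of the even-walk binomial in~\eqref{eq:evenReducedWalk} is $x_iy_j\prod_{k\in P^x}x_k\prod_{k\in P^y}y_k$.

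The heart of the proof is the S-pair analysis. For two reduced binomials $f, g \in \gb$ corresponding to walks $P$ and $Q$, I would verify that $S(f, g)$ reduces to zero modulo $\gb$. If the leading monomials of $f$ and $g$ are coprime, this is automatic. Otherwise $P$ and $Q$ share a vertex $v$; after cancelling the common factor in the leading monomials, the S-polynomial equals, up to a monomial factor, the boundary binomial of a walk obtained by concatenating an appropriate subwalk of $P$ with an appropriate subwalk of $Q$ glued at $v$, with parity and endpoints dictated by the two glued pieces. Lemma~\ref{l:ReductionOfWalkBinomials} then yields reduction to zero modulo $\gb$.

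Reducedness follows directly from the bullets of Definition~\ref{d:ReducedWalks}: each of them is designed to forbid a monomial of a reduced binomial from being divisible by the leading monomial of another reduced binomial. The first bullets rule out divisibility by a leading monomial coming from a shorter $(i,j)$-walk of the same parity, while the remaining bullets fix the choice of $t_k \in \{x_k, y_k\}$ so that neither the leading nor the trailing monomial is divisible by the leading monomial of a reducer attached to an $(i,k)$- or $(k,j)$-subwalk through an interior vertex $k$; this is the mechanism illustrated in Example~\ref{e:evenReducedWalks}. The main obstacle is therefore the S-polynomial bookkeeping, which splits into numerous cases according to the parities of $P$ and $Q$, whether $v$ is an endpoint or interior point of each walk, and the relative $\succ$-order of the two pairs of endpoints and of $v$. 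In every case one has to identify the correct concatenation and verify that its interior monomial, as decreed by Remark~\ref{r:writeExplicit}, matches the one produced by the S-polynomial.
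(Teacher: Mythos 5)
Your overall architecture matches the paper's: Buchberger's criterion, with the essential S-pairs identified as monomial multiples of binomials of walks glued at a shared vertex, then reduced to zero via Lemma~\ref{l:ReductionOfWalkBinomials}; the paper carries this out in Lemmas~\ref{l:GBEvenWalks}--\ref{l:GBMixedWalks}. However, your S-pair dichotomy has a genuine gap: you claim that whenever the leading monomials of $g_P$ and $g_Q$ are not coprime, the S-polynomial is, up to a monomial factor, the binomial of a single glued walk. This fails when the leading monomials overlap \emph{only in interior variables}. Take two odd walks $P$ and $Q$ with four distinct endpoints sharing an interior vertex $v$, with $t_v=y_v$ in both reduced binomials. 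The leading monomials both contain $y_v$, so they are not coprime and your first branch does not apply; but the S-polynomial is a monomial multiple of
\begin{equation*}
x_{q_1}x_{q_2}y_{p_1}y_{p_2}-x_{p_1}x_{p_2}y_{q_1}y_{q_2},
\end{equation*}
which has four ``active'' vertices and is not a monomial multiple of any walk binomial (those have quadratic cores $x_ix_j-y_iy_j$ or $x_iy_j-y_ix_j$ with the interior variables appearing in both terms), so Lemma~\ref{l:ReductionOfWalkBinomials} cannot be invoked directly. The same problem occurs in the mixed case when $q_2\in\int(P)$, and in the even--even case $q_1\succ q_2=p_1\succ p_2$, where the shared vertex carries \emph{different} variables ($y_{q_2}$ versus $x_{p_1}$) and gluing produces the wrong binomial.

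The paper closes exactly this gap with Lemma~\ref{l:RegularSequences}: if the leading monomials of the quadratic cores $f$ and $g$ form a regular sequence (i.e.\ are coprime), then $\spol(uf,vg)$ reduces to zero for \emph{all} monomial multipliers $u,v$ --- even when $u$ and $v$ share variables, so that the full leading monomials of $uf$ and $vg$ are not coprime. With this lemma, the case analysis is governed by coprimality of the cores ($x_{p_1}x_{p_2}$, $x_{q_1}y_{q_2}$, etc.), not of the full leading monomials, and the gluing argument is only ever needed when the walks share an endpoint whose variable genuinely occurs in both core leading terms. To repair your proof, replace the dichotomy ``coprime leading monomials / shared vertex'' by ``coprime core leading monomials / overlapping core leading monomials'' and prove (or cite) the regular-sequence refinement of Buchberger's first criterion; the rest of your outline, including generation of $\pbei{G}$ by $\gb$ and the reducedness discussion, is sound and agrees with the paper.
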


\begin{remark}\label{r:Herzog}
The Gröbner basis in \cite{Herzog2010} looks similar, but for the
original binomial edge ideals there are no binomials corresponding to
$k\in\int(P)$ with $i \succ k \succ j$ in the Gröbner basis. The
Gröbner basis there is also not a subset of our Gröbner basis.  For
example, all Gröbner elements in \cite{Herzog2010} which come from
\emph{admissible} $(i,j)$-paths can be reduced to zero by odd moves
from $\gb$ if there exists an odd $(i,k)$-subwalk with $j\succ k$.
\end{remark}

For the reduction of s-polynomials we use the following well-known
fact.
\begin{lemma}\label{l:RegularSequences}
Let $f,g\in\kk[\xx]$ and $\succ$ a monomial ordering. If their leading
monomials form a regular sequence, then $\spol(uf,vg)$ reduces to zero
for all monomials $u,v\in\kk[\xx]$\footnote[1]{It is important to note
that the reduction in this lemma is by $f$ and~$g$, not $uf$ and $vg$.
In the proofs of Lemmas~\ref{l:GBEvenWalks},~\ref{l:GBOddWalks},
and~\ref{l:GBMixedWalks} it is applied wrongly.  The elements $f$ and
$g$ there are not contained in the parity binomial edge ideal and the
lemma cannot be used to deduce that these S-pairs reduce to zero.
However, all cases in which we have wrongly applied
Lemma~\ref{l:RegularSequences} are straight-forward and
Theorem~\ref{thm:GB} is true. Moreover, the universal Gröbner basis of
parity binomial edge ideals is computed in
\cite[Chapter~6]{windisch2017}, from which the reduced Gröbner basis
with respect to $\succ$ as stated in Theorem~\ref{thm:GB} can be
deduced.  This error exists in the published version and this arXiv
version agrees with the published version with the exception of this
footnote.  We thank Aldo Conca for pointing out this problem.}.
\end{lemma}

\begin{lemma}\label{l:GBEvenWalks}
Let $g_P$ and~$g_Q$ be reduced binomials corresponding to even walks
$P$ and~$Q$.  Then $\spol(g_P,g_Q)$ reduces to zero with respect
to~$\gb$.
\end{lemma}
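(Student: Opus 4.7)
The plan is to verify Buchberger's criterion by showing that $\spol(g_P, g_Q)$ reduces to zero modulo $\gb$. Write the walks as $P\colon i\to j$ and $Q\colon a\to b$ with $i\succ j$ and $a\succ b$, and the reduced binomials as $g_P = (x_iy_j - y_ix_j)\,m_P$, $g_Q = (x_ay_b - y_ax_b)\,m_Q$, with $m_P=\prod_{k\in\int(P)}t_k^P$ and $m_Q=\prod_{k\in\int(Q)}t_k^Q$. Under the given ordering the leading monomials are $\ini(g_P)=x_iy_j\,m_P$ and $\ini(g_Q)=x_ay_b\,m_Q$. If these are coprime, Lemma~\ref{l:RegularSequences} applied with $u=v=1$ yields $\spol(g_P,g_Q)\to 0$ modulo $\{g_P,g_Q\}\subset\gb$ immediately, disposing of the bulk of pairs.

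In the remaining, non-coprime case a direct expansion gives
\[
\spol(g_P, g_Q) \;=\; y_ax_b\cdot\frac{L}{x_ay_b} \;-\; y_ix_j\cdot\frac{L}{x_iy_j},\qquad L=\lcm(\ini(g_P),\ini(g_Q)),
\]
and the approach is to rewrite the right-hand side as a monomial multiple of an even walk binomial (or a sum of two such) for a walk $R$ obtained from $P$ and $Q$ at the common vertex $v$ responsible for the overlap. Because $\ell(P)+\ell(Q)$ is even, every such concatenation is even. Splitting by how the endpoint sets meet: if $i=a$, $R$ is the even $(j,b)$-walk $P^{-1}*Q$, and the non-monomial factor in the expansion collapses to $-x_iy_i(x_jy_b-y_jx_b)$; if $j=b$, $R$ is the even $(i,a)$-walk $P*Q^{-1}$, with factor $x_jy_j(x_iy_a-y_ix_a)$; if $i=b$, $R$ is the even $(a,j)$-walk $Q*P$, and the factor algebraically splits as $x_iy_j(x_iy_a-y_ix_a)+y_ix_a(x_iy_j-y_ix_j)$, a sum of two even-walk binomials from $Q^{-1}$ and $P$; if $j=a$, $R$ is the even $(i,b)$-walk $P*Q$, with factor $-(x_iy_j)(x_jy_b-y_jx_b)+(x_iy_j-y_ix_j)(x_jy_b)$. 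Overlaps at interior vertices, or between an endpoint of one walk and the interior of the other, feed into these same four configurations once $v$ is identified as the splice vertex.

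Once $\spol(g_P,g_Q)$ is presented as a monomial times an even walk binomial (or a sum thereof), Lemma~\ref{l:ReductionOfWalkBinomials} delivers the reduction to zero modulo $\gb$. The main obstacle is the bookkeeping in the non-coprime case: in each sub-case one must confirm that the accompanying monomial supplies a variable at every interior vertex of $R$, so that the product is genuinely of the form covered by Lemma~\ref{l:Walks}. The containment $\int(R)\subseteq\int(P)\cup\int(Q)\cup\{v\}$ makes this plausible, with the splice vertex's $x$- and $y$-variables appearing explicitly in the factored bracket; but the $\gcd$ in the S-polynomial division may absorb variables shared between $m_P$ and $m_Q$, necessitating a case check on how $\int(P)\cap\int(Q)$ and the assignments $t_k^P, t_k^Q$ contribute to the overlap.
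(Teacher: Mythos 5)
There is a genuine gap, and it sits exactly where you flag ``the main obstacle.'' You split pairs by whether the \emph{full} leading monomials $x_iy_j\,m_P$ and $x_ay_b\,m_Q$ are coprime, and in the non-coprime case you assume a splice vertex $v$ can be located among the endpoints. But the leading monomials can fail to be coprime while all four endpoints are distinct: the interior monomials $m_P$ and $m_Q$ may share variables $t_k$ at common interior vertices, and then no concatenation of $P$ and $Q$ is available at all, so your closing claim that such overlaps ``feed into these same four configurations'' is false as stated. The paper's proof avoids this entirely by using the full strength of Lemma~\ref{l:RegularSequences}: that lemma concerns $\spol(uf,vg)$ for \emph{arbitrary} monomial multipliers $u,v$, so one takes $f=x_{p_1}y_{p_2}-y_{p_1}x_{p_2}$ and $g=x_{q_1}y_{q_2}-y_{q_1}x_{q_2}$ (the endpoint cores) and $u=m_P$, $v=m_Q$; coprimality then need only be checked for the cores $x_{p_1}y_{p_2}$ and $x_{q_1}y_{q_2}$, which holds whenever $p_1\neq q_1$ and $p_2\neq q_2$, irrespective of any interior overlap. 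This also disposes of your crossing cases $i=b$ and $j=a$ with no computation: at the shared vertex one core contributes an $x$-variable and the other a $y$-variable (e.g.\ $x_{q_1}y_{q_2}$ and $x_{p_1}y_{p_2}$ with $q_2=p_1$ are coprime), so the regular-sequence lemma applies again.

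Your explicit treatment of $i=b$ and $j=a$ as a \emph{sum} of two walk-binomial multiples is also not yet a proof of reduction to zero: Buchberger's criterion requires a standard representation, and writing the s-polynomial as $h_1+h_2$ with each $h_\ell$ reducible to zero only suffices if no leading-term cancellation occurs between $h_1$ and $h_2$, which you do not verify (your $i=a$ computation also carries an extraneous factor $x_i$; the correct cofactor at the splice vertex is $y_v$ alone). The only cases that genuinely need gluing are $p_1=q_1$ and $p_2=q_2$ (your $i=a$ and $j=b$); there the paper computes the s-polynomial in closed form, using the normal form of Remark~\ref{r:writeExplicit}, as
\[
(x_{q_2}y_{p_2}-y_{q_2}x_{p_2})\cdot y_v\cdot \prod_{k\in P^x\cup Q^x}x_k\prod_{k\in(P^y\cup Q^y)\setminus\{q_2,p_2\}}y_k,
\]
a single monomial multiple of the binomial of the glued even $(q_2,p_2)$-walk (which may traverse $v$), with a variable supplied at every interior vertex as Lemma~\ref{l:Walks} demands; Lemma~\ref{l:ReductionOfWalkBinomials} then finishes, and equal endpoint pairs give $\spol(g_P,g_Q)=0$ outright. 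To repair your argument, re-organize the case split by endpoint coincidences only, invoke Lemma~\ref{l:RegularSequences} with multipliers for every pair with $p_1\neq q_1$ and $p_2\neq q_2$, and keep your two legitimate gluing cases with the explicit cofactor computation.
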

\begin{proof}
Let $P$ be an even $(p_1,p_2)$-walk with $p_1\succ p_2$ and $Q$ be an
even $(q_1,q_2)$-walk with $q_1\succ q_2$.  By
Remark~\ref{r:writeExplicit} we write
\begin{equation*}
\begin{split}
g_P=(x_{p_1}y_{p_2}-y_{p_1}x_{p_2})\cdot\prod_{i\in
P^x}x_i\cdot\prod_{i\in P^y}y_i\\
g_Q=(x_{q_1}y_{q_2}-y_{q_1}x_{q_2})\cdot\prod_{i\in
Q^x}x_i\cdot\prod_{i\in Q^y}y_i.
\end{split}
\end{equation*}
If $|\{p_1,p_2,q_1,q_2\}| = 4$, then $x_{p_1}y_{p_2}$ and
$x_{q_1}y_{q_2}$ are coprime and thus form a regular sequence.  Lemma
\ref{l:RegularSequences} gives this case.  If
$\{p_1,p_2,q_1,q_2\} = \{q_1,q_2\}$, then the s-polynomial is zero.

The only interesting case is when $P$ and $Q$ have precisely one
endpoint in common.  First, let that common endpoint be $v:=p_1=q_1$.
Since $p_1\not\in Q^x$, $q_1\not\in P^x$, and since
we can assume that $q_2\succ p_2$, the s-polynomial is
\begin{equation*}
(x_{q_2}y_{p_2}-y_{q_2}x_{p_2})\cdot y_v\cdot \prod_{i\in P^x\cup
Q^x}x_i\prod_{i\in(P^y\cup Q^y)\setminus\{q_2,p_2\}}y_i.
\end{equation*}
This binomial is a monomial multiple of the binomial obtained from the
$(q_2,p_2)$-walk which might traverse the vertex $v=p_1=q_2$.  Hence,
the s-polynomial reduces to zero by
Lemma~\ref{l:ReductionOfWalkBinomials}.  The case that $p_2=q_2$ is
similar and omitted.  The last case is (without loss of generality)
$q_1\succ q_2=p_1\succ p_2$. In this case, $x_{q_1}y_{q_2}$ and
$x_{p_1}y_{p_2}$ form a regular sequence and due to
Lemma~\ref{l:RegularSequences} $\spol(g_P,g_Q)$ reduces to zero.
\end{proof}

\begin{lemma}\label{l:GBOddWalks}
Let $g_P$ and~$g_Q$ be reduced binomials corresponding to odd walks
$P$ and~$Q$.  Then $\spol(g_P,g_Q)$ reduces to zero with respect
to~$\gb$.
\end{lemma}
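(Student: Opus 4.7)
The plan is to mirror the proof of Lemma~\ref{l:GBEvenWalks}. Since $P$ and $Q$ are odd, Remark~\ref{r:writeExplicit} gives $P^y = \int(P)$ and $Q^y = \int(Q)$, so
\begin{equation*}
g_P = (x_{p_1}x_{p_2} - y_{p_1}y_{p_2})\prod_{k\in\int(P)}y_k, \qquad g_Q = (x_{q_1}x_{q_2} - y_{q_1}y_{q_2})\prod_{k\in\int(Q)}y_k,
\end{equation*}
with $p_1 \succeq p_2$ and $q_1 \succeq q_2$. I would split into cases by the size of $\{p_1,p_2\} \cap \{q_1,q_2\}$, viewed as a set of vertices. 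If the intersection is empty, then $x_{p_1}x_{p_2}$ and $x_{q_1}x_{q_2}$ are coprime monomials, hence form a regular sequence, and Lemma~\ref{l:RegularSequences} delivers the reduction. If $\{p_1,p_2\} = \{q_1,q_2\}$ as sets, the ordering convention forces $p_1 = q_1$ and $p_2 = q_2$; the binomial parts then coincide and a short direct computation gives $\spol(g_P,g_Q) = 0$, exactly as in Lemma~\ref{l:GBEvenWalks}.

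The substantive case is a single shared vertex $v \in \{p_1,p_2\} \cap \{q_1,q_2\}$. There are four sub-cases depending on whether $v$ is the larger or smaller endpoint in each walk; by symmetry it suffices to treat one, for instance $v = p_1 = q_2$, so that $q_1 \succeq v \succeq p_2$. A direct expansion of the s-polynomial cancels the $x_v x_{p_2} x_{q_1}$ terms and yields
\begin{equation*}
\spol(g_P, g_Q) \,=\, \pm\, y_v \Bigl(\prod_{k\in\int(P)\cup\int(Q)}y_k\Bigr) \bigl(x_{q_1}y_{p_2} - x_{p_2}y_{q_1}\bigr).
\end{equation*}
The inner factor is the even walk binomial of Lemma~\ref{l:Walks} associated to the $(q_1,p_2)$-walk obtained by concatenating $Q$ (from $q_1$ to $v$) with $P$ (from $v$ to $p_2$); this concatenation has even length because odd $+$ odd $=$ even. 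Its interior equals $\int(P)\cup\int(Q)\cup\{v\}$ up to possibly omitting $q_1$ (if $q_1 \in \int(P)$) or $p_2$ (if $p_2 \in \int(Q)$), so the $y$-monomial appearing above is either exactly, or a monomial multiple of, the $y$-monomial demanded by Lemma~\ref{l:Walks}. Either way, $\spol(g_P,g_Q)$ is a monomial multiple of a walk binomial and reduces to zero modulo~$\gb$ by Lemma~\ref{l:ReductionOfWalkBinomials}.

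The main obstacle will be the case bookkeeping: the three remaining single-shared-endpoint sub-configurations ($v = p_1 = q_1$, $v = p_2 = q_2$, $v = p_2 = q_1$) each require a slightly different s-polynomial computation, and in every case one must verify that the cancelled result is exactly a concatenation walk binomial of the form prescribed by Lemma~\ref{l:Walks}. The degenerate situations in which $P$ or $Q$ is an odd cycle ($p_1 = p_2$ or $q_1 = q_2$) change the $x$-exponent in the leading monomial by one, but do not alter the overall scheme: two odd cycles based at the same vertex fall under the equal-endpoint-set case, while a cycle at $v$ meeting a non-cycle through $v$ fits into the single-shared-endpoint case with essentially unchanged arithmetic.
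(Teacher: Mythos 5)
Your proof is correct and takes essentially the same route as the paper's: Lemma~\ref{l:RegularSequences} for disjoint endpoint sets, a vanishing s-polynomial when $\{p_1,p_2\}=\{q_1,q_2\}$, and, in the single-shared-vertex case, identifying $\spol(g_P,g_Q)$ as a monomial multiple of the even-walk binomial obtained by gluing $P$ and $Q$ at the common vertex, which reduces to zero by Lemma~\ref{l:ReductionOfWalkBinomials}. Your explicit sub-case computations and the remarks on odd cycles merely spell out details that the paper compresses into a single sentence.
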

\begin{proof}
Assume that $P$ is a $(p_1,p_2)$-walk with $p_1\succ p_2$ and $Q$ is a
$(q_1,q_2)$-walk with $q_1\succ q_2$.  Without loss of generality, let
$p_1\succ q_1$.  By Lemma~\ref{l:RegularSequences}, we can assume
$|\{p_1,p_2\}\cap\{q_1,q_2\}|\ge 1$.  Clearly, if
$\{p_1,p_2\}=\{q_1,q_2\}$, $\spol(g_P,g_Q)=0$.  In total assume that
$\{p_1,p_2\}\neq\{q_1,q_2\}$.  Under this assumptions, in all
remaining cases, the s-polynomial is a monomial multiple of the
binomial corresponding to the even walk which arises from gluing $P$
and $Q$ along the vertex they have in common.
\end{proof}

\begin{lemma}\label{l:GBMixedWalks}
Let $g_P$ and~$g_Q$ be reduced binomials corresponding,
respectively, to an odd walk $P$ and an even walk~$Q$.  Then $\spol(g_P,g_Q)$
reduces to zero with respect to~$\gb$.
\end{lemma}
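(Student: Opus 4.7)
The plan is to follow the template of Lemmas~\ref{l:GBEvenWalks} and~\ref{l:GBOddWalks}: write $g_P = (x_{p_1}x_{p_2}-y_{p_1}y_{p_2})\, u_P$ with $p_1 \succeq p_2$, and $g_Q = (x_{q_1}y_{q_2} - y_{q_1}x_{q_2})\, u_Q$ with $q_1 \succ q_2$, in the form of Remark~\ref{r:writeExplicit}, and then split into cases according to how the endpoint sets $\{p_1,p_2\}$ and $\{q_1,q_2\}$ intersect. Since the odd leading term $x_{p_1}x_{p_2}$ involves only $x$-variables while the even leading term $x_{q_1}y_{q_2}$ involves one $x$ and one $y$, the leading monomials of the underlying walk binomials (setting aside the interior factors $u_P$ and $u_Q$) are coprime precisely when the endpoint sets are disjoint, or $p_1 = q_2$, or $p_2 = q_2$. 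In all three of these configurations, Lemma~\ref{l:RegularSequences} reduces $\spol(g_P, g_Q)$ to zero.

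The remaining configurations are those in which an $x$-variable is truly shared, namely $p_1 = q_1$, $p_2 = q_1$, and $\{p_1,p_2\} = \{q_1,q_2\}$; the last of these forces $p_1=q_1$ and $p_2=q_2$ by the endpoint orderings. In each such case I would compute $\spol(g_P, g_Q)$ directly, just as in the previous two lemmas. After cancellation, the s-polynomial simplifies to a monomial factor (involving $y_v$ for the shared vertex $v$ together with a divisor of $\lcm(u_P, u_Q)$) times the binomial of the walk obtained by gluing $P$ and $Q$ at $v$. Since the concatenation of an odd walk and an even walk is itself odd (or, in the case $\{p_1,p_2\} = \{q_1,q_2\}$, an odd cycle at $p_2$ whose binomial is $x_{p_2}^2 - y_{p_2}^2$), this glued walk binomial has the form $x_a x_b - y_a y_b$. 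Choosing $t_v := y_v$ at the new interior vertex and, for each $k \in \int(P) \cup \int(Q)$, a $t_k$ matching the factor appearing in $\lcm(u_P,u_Q)$, yields a walk binomial in $\pbei{G}$ by Lemma~\ref{l:Walks} whose monomial multiple is precisely $\spol(g_P, g_Q)$; Lemma~\ref{l:ReductionOfWalkBinomials} then delivers the reduction to zero.

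The main obstacle will be the careful bookkeeping when $\int(P) \cap \int(Q)$ is nonempty or when a shared endpoint also appears in the interior of the other walk: in those situations the s-polynomial is not the clean product $u_P u_Q$ times a walk binomial, but a quotient by appropriate common factors, and one must verify that the walk binomial from the glued walk still divides what remains. This issue, however, mirrors exactly the corresponding steps in Lemmas~\ref{l:GBEvenWalks} and~\ref{l:GBOddWalks}, and is controlled by the freedom in choosing the $t_k$'s granted by Lemma~\ref{l:Walks}.
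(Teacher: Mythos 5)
Your proposal is correct and follows essentially the same route as the paper's proof: reduce via Lemma~\ref{l:RegularSequences} to the cases where $q_1\in\{p_1,p_2\}$ (since $x_{p_1}x_{p_2}$ and $x_{q_1}y_{q_2}$ can only share the variable $x_{q_1}$), then exhibit $\spol(g_P,g_Q)$ as a monomial multiple (including the factor $y_v$ at the shared vertex) of the binomial of the odd walk obtained by gluing $P$ and $Q$, which reduces to zero by Lemma~\ref{l:ReductionOfWalkBinomials}. Your explicit treatment of the degenerate case $\{p_1,p_2\}=\{q_1,q_2\}$ via the odd-cycle binomial $x_{p_2}^2-y_{p_2}^2$ is subsumed in the paper's phrase ``traversing $p_1=q_1$ if necessary,'' so there is no substantive difference.
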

\begin{proof}
Let $P$ be an $(p_1,p_2)$-walk with $p_1\succeq p_2$ and $Q$ be an
even $(q_1,q_2)$-walk with $q_1\succ q_2$. By
Remark~\ref{r:writeExplicit} we write
\begin{equation*}
\begin{split}
g_P&=(x_{p_1}x_{p_2}-y_{p_1}y_{p_2})\prod_{i\in\int(P)}y_i,\\
g_Q&=(x_{q_1}y_{q_2}-y_{q_1}x_{q_2})\prod_{i\in Q^x}x_i\prod_{i\in Q^y}y_i.
\end{split}
\end{equation*}
By Lemma \ref{l:RegularSequences} it suffices to consider the case
that $q_1\in\{p_1,p_2\}$. If $p_1=q_1$, then
\begin{equation*}
\spol (g_P,g_Q) = (x_{p_2}x_{q_2}-y_{q_2}y_{p_2})y_{p_1}\prod_{i\in\int(P)\setminus
q_2}y_i\prod_{i\in Q^y}y_i\prod_{i\in Q^x\setminus p_2}x_i.
\end{equation*}
This s-polynomial is a monomial multiple of the binomial corresponding
to some $(p_2,q_2)$-walk, traversing $p_1=q_1$ if necessary.  Thus it
reduces by Lemma~\ref{l:ReductionOfWalkBinomials}.  The case that
$p_2=q_1$ is similar and omitted.
\end{proof}

\begin{proof}[Proof of Theorem~\ref{thm:GB}]
According to Lemma~\ref{l:GBEvenWalks}, Lemma~\ref{l:GBOddWalks}, and
Lemma~\ref{l:GBMixedWalks} the set $\gb$ fulfills Buchberger's
criterion and hence is a Gröbner basis of~$\pbei{G}$.  By
construction, the elements of $\gb$ are reduced with respect
to~$\succ$.
\end{proof}

Theorem~\ref{thm:GB} implies in particular that parity binomial edge
ideals of bipartite graphs are radical (which they must be by
Remark~\ref{r:bipartite-bei}).  This, however, does not require the
square-free initial ideal: if $\charac(\kk)\neq 2$ then all parity
binomial edge ideals are radical by Theorem~\ref{t:radical}.

\begin{remark}\label{r:nosqfreeGB}
The parity binomial edge ideal $\pbei{K_3}$ of the 3-cycle $K_3$,
cannot have a square-free initial ideal with respect to any monomial
order.  This follows from the fact that $\pbei{K_3}$ is not radical in
$\mathbb{F}_2[\xx,\yy]$ (see
Remark~\ref{r:RadicalityOverFiniteField}).  If
$\pbei{K_3}$ had a square-free Gröbner basis over some field $\kk$,
its binomials must be pure difference (since the generators of
$\pbei{K_3}$ are pure difference).  The pure difference property
yields that this Gröbner basis would also be a square-free Gröbner
basis over every other field, in particular, over~$\mathbb{F}_2$.
\end{remark}

\section{Minimal primes}\label{s:minprimes}

Generally, the minimal primes of a binomial ideal come in groups
corresponding to the sets of indeterminates they contain.  To start, we
determine the minimal primes of $\pbei{G}$ that contain no indeterminates,
that is, the minimal primes of~$\sat{G}$.  They follow quickly from
the next lemma, together with the results in \cite[Section~2]{es96}.

\begin{lemma}\label{l:smith}
Apart from zero rows, the Smith normal form of $\begin{pmatrix} A_G\\
-A_G \end{pmatrix}$
is the diagonal matrix $\diag(1,\dots,1,2,\dots,2)$ whose number of
entries $1$ is $|\vertices{G}|-\cComp(G)$ and the number of entries
$2$ equals~$\nbComp(G)$.
\end{lemma}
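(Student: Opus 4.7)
The plan is to reduce the statement to a Smith-normal-form computation of $A_G$ alone, and then to compute the cokernel $\ZZ^{\vertices{G}}/\im(A_G)$ one connected component at a time.

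For the first reduction, the unimodular row operation that adds the top block to the bottom block of $\begin{pmatrix} A_G \\ -A_G \end{pmatrix}$ produces $\begin{pmatrix} A_G \\ 0 \end{pmatrix}$, so the nonzero invariant factors are exactly those of $A_G$. Because $A_G$ is block diagonal over the connected components of $G$, the cokernel splits as a direct sum of $\ZZ^{\vertices{H}}/\im(A_H)$ over connected components $H$. It thus suffices to show that this cokernel is $\ZZ$ when $H$ is bipartite and $\ZZ/2\ZZ$ when $H$ is non-bipartite.

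For bipartite $H$ with parts $V_1 \cup V_2$, the image $\im(A_H)$ sits inside the saturated rank-$(|\vertices{H}|-1)$ sublattice $\{a : \sum_{V_1} a_v = \sum_{V_2} a_v\}$, and a spanning-tree argument (every non-tree edge is the $\pm 1$ telescoping combination of tree edges along the unique tree path between its endpoints) yields equality, so the cokernel is $\ZZ$. For non-bipartite $H$, the key move is to pick any odd cycle $v_0, v_1, \dots, v_{2k+1} = v_0$ and use the telescoping alternating sum
\begin{equation*}
\sum_{i=1}^{2k+1} (-1)^{i-1}(e_{v_{i-1}} + e_{v_i}) = 2 e_{v_0}
\end{equation*}
to place $2 e_{v_0}$ in $\im(A_H)$. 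Connectivity then propagates this to $2 e_v \in \im(A_H)$ for every $v$ via $2 e_w = 2(e_v + e_w) - 2 e_v$ along a path. Since each generator of $\im(A_H)$ has even coordinate sum, $\im(A_H) \subseteq \{a : \sum_v a_v \equiv 0 \pmod 2\}$, and a standard spanning-tree reduction (using edge generators to collapse onto a single coordinate, then $2 e_{v_0}$ to handle parity) establishes equality. Hence the cokernel is $\ZZ/2\ZZ$.

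Assembling over components, $\ZZ^{\vertices{G}}/\im(A_G) \cong \ZZ^{\bComp(G)} \oplus (\ZZ/2\ZZ)^{\nbComp(G)}$. Since the rank of $A_G$ over $\QQ$ is $|\vertices{G}| - \bComp(G)$, this cokernel pins down the Smith form: $\nbComp(G)$ invariant factors equal $2$ and the remaining $|\vertices{G}|-\bComp(G)-\nbComp(G) = |\vertices{G}|-\cComp(G)$ equal $1$. The main obstacle is the non-bipartite cokernel computation, specifically verifying that the elements $2 e_v$ coming from odd cycles together with the edge generators exhaust the full index-$2$ sublattice rather than landing in a proper sublattice; the odd-cycle trick plus connectivity handles this cleanly.
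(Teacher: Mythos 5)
Your proof is correct, but it takes a genuinely different route from the paper: the paper's entire proof of this lemma is a citation to \cite[Theorem~3.3]{Grossman1995}, where the Smith normal form of the unsigned incidence matrix of a graph is computed, whereas you give a self-contained elementary argument. Your two reductions are sound: the unimodular block-row operation turning $\begin{pmatrix} A_G \\ -A_G \end{pmatrix}$ into $\begin{pmatrix} A_G \\ 0 \end{pmatrix}$ correctly shows the nonzero invariant factors are those of $A_G$ alone, and the component-wise cokernel computation ($\ZZ$ for a bipartite component, $\ZZ/2\ZZ$ for a non-bipartite one, via the odd-cycle telescoping identity $\sum_{i=1}^{2k+1}(-1)^{i-1}(e_{v_{i-1}}+e_{v_i})=2e_{v_0}$ together with propagation $2e_w = 2(e_v+e_w)-2e_v$ along edges) pins down the Smith form exactly as the lemma states, since $\cComp(G)=\bComp(G)+\nbComp(G)$. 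What each approach buys: the citation keeps the paper short and anchors the lemma in the literature on Smith forms of incidence matrices; your argument makes visible \emph{why} the torsion is $(\ZZ/2\ZZ)^{\nbComp(G)}$, and the odd-cycle telescoping is precisely the lattice-level shadow of Remark~\ref{r:oddCycle} (monomial multiples of $x_i^2-y_i^2$ lie in $\pbei{G}$ when $i$ meets an odd cycle) and of the parity-of-walks description of the Graver basis in Proposition~\ref{p:graver}, so it integrates well with the combinatorics of the paper. One presentational compression worth fixing: in the bipartite case your parenthetical (non-tree edges are $\pm 1$ telescoping combinations of tree edges) only shows $\im(A_H)$ equals the lattice generated by the tree edges; to conclude that this equals the full saturated sublattice $\{a: \sum_{v\in V_1} a_v = \sum_{v\in V_2} a_v\}$ you still need the leaf-stripping collapse you do spell out in the non-bipartite case (kill coordinates one leaf at a time using tree edges; the residual vector supported on a single vertex must vanish because the defining linear form vanishes on it). That is standard and identical in spirit to your ``collapse onto a single coordinate'' step, so it is a gap in exposition, not in mathematics.
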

\begin{proof}
See \cite[Theorem~3.3]{Grossman1995}.
\end{proof}

The following ideals are the building blocks for the primary
decomposition of~$\sat{G}$.  For any connected graph $G$ with an odd
cycle, let
\[ \nonbipPos{G} = \ideal{x_i+y_i: i\in\vertices{G}} \quad \text{and}
\quad \nonbipNeg{G} := \ideal{x_i-y_i: i\in\vertices{G}}.
\]

\begin{prop}\label{p:decompSat}
Let $G$ be a graph consisting of bipartite connected components
$B_1,\dots,B_{\bComp(G)}$ and non-bipartite connected components
$N_1,\dots,N_{\nbComp(G)}$. If $\charac(\kk) \neq 2$, then $\sat{G}$
is radical, and its minimal primes are the $2^{\nbComp(G)}$ ideals
\[
\sum_{i=1}^{\bComp(G)}
\sat{B_i} + \sum_{i=1}^{\nbComp(G)} \pp^{\sigma_i}(N_i),
\]
where $\sigma$ ranges over $\{+,-\}^{\nbComp(G)}$.  On the other hand,
if $\charac(\kk) = 2$, then
\[
\sat{G} = \sum_{i=1}^{\bComp(G)} \sat{B_i} + \sum_{i=1}^{\nbComp(G)}
\sat{N_i}
\]
is primary of multiplicity $2^{\nbComp(G)}$ over the minimal prime
$\sum_{i=1}^{\bComp(G)} \sat{B_i} + \sum_{i=1}^{\nbComp(G)}
\pp^+(N_i)$.
\end{prop}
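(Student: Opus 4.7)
My approach is to reduce to a single connected component and then invoke the Eisenbud--Sturmfels primary decomposition of lattice ideals from \cite[Section~2]{es96} in conjunction with Lemma~\ref{l:smith}.

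Since the vertex sets of distinct connected components are disjoint, the lattice $\lattice$ splits as a direct sum of sublattices $L_C$ indexed by the components $C$ of $G$, each $L_C$ involving only the indeterminates at vertices of~$C$. Correspondingly $\sat{G} = \sum_{i} \sat{B_i} + \sum_{j} \sat{N_j}$ as a sum of ideals in disjoint variable sets, and such sums behave well under primary decomposition: minimal primes are sums of minimal primes of the summands, and multiplicities multiply. Thus it suffices to analyse each $\sat{C}$ separately and reassemble at the end.

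For a bipartite connected component $B$, Lemma~\ref{l:smith} says $L_B$ is already saturated in its ambient $\ZZ^{2|V(B)|}$; by the Eisenbud--Sturmfels criterion \cite[Section~2]{es96}, $\sat{B}$ is therefore prime. For a non-bipartite connected component $N$, Lemma~\ref{l:smith} gives $L_N^{\mathrm{sat}}/L_N \cong \ZZ/2$; the nontrivial class has representative $(e_v, -e_v)$ for any fixed $v \in V(N)$, because twice it is the exponent of the binomial $x_v^2 - y_v^2 \in \sat{N}$ furnished by Remark~\ref{r:oddCycle}, while $(e_v, -e_v) \notin L_N$ since $e_v$ does not lie in the image of $A_N$. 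The lattice-ideal primary decomposition then parametrises the associated primes of $\sat{N}$ by characters $\rho\colon L_N^{\mathrm{sat}}/L_N \to \kk^\times$, and the prime attached to the character sending the generator $\epsilon$ to $s$ is the linear ideal $\ideal{x_v - s\,y_v : v \in V(N)}$. The equation $s^2 = 1$ has solutions $s = \pm 1$ when $\charac(\kk) \neq 2$, yielding the two minimal primes $\nonbipNeg{N}$ and $\nonbipPos{N}$ and a radical decomposition; in characteristic~$2$ only $s = 1 = -1$ remains, yielding the single minimal prime $\nonbipPos{N} = \nonbipNeg{N}$ over which $\sat{N}$ is primary of multiplicity $|L_N^{\mathrm{sat}}/L_N| = 2$.

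Reassembling across components gives both parts of the proposition: in characteristic $\neq 2$ the $2^{\nbComp(G)}$ minimal primes correspond to independent sign choices over the non-bipartite components, and $\sat{G}$ is radical since the torsion order $2^{\nbComp(G)}$ is a unit in $\kk$; in characteristic~$2$ there is a unique minimal prime and the multiplicity of $\sat{G}$ over it is the product of component multiplicities, namely $2^{\nbComp(G)}$. The main subtlety I anticipate is certifying that multiplicity $2^{\nbComp(G)}$ in characteristic two; I would extract it from the standard lattice-ideal fact that localising at the minimal prime produces a $\kk$-algebra of length equal to $|L^{\mathrm{sat}}/L|$, which here is $2^{\nbComp(G)}$ by Lemma~\ref{l:smith}.
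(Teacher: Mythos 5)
Your proposal is correct in substance and rests on the same two pillars as the paper's own proof --- reduction to connected components, plus \cite[Corollary~2.2]{es96} in combination with Lemma~\ref{l:smith} --- but you execute the key step differently. You \emph{derive} the minimal primes inside the character machinery: you exhibit $(e_v,-e_v)$ as the generator of $L_N^{\mathrm{sat}}/L_N\cong\ZZ/2\ZZ$ (this is right: $2e_v\in\im(A_N)$ comes from an odd closed walk, and $e_v\notin\im(A_N)$ because every column of $A_N$ has even coordinate sum --- state this parity argument, which you currently only assert) and read off the primes attached to the two extensions $s=\pm1$. One step you gloss: to conclude that the extension sending $\epsilon\mapsto s$ cuts out $\ideal{x_w-s\,y_w : w\in\vertices{N}}$, you need the extended character to take the same value on every $(e_w,-e_w)$, i.e., that $(e_w-e_v,e_v-e_w)\in L_N$; this holds because a connected non-bipartite graph admits an even $(v,w)$-walk for every pair $v,w$, and it deserves a line. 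The paper instead \emph{verifies} candidate primes: it shows $\sat{G}\subseteq\nonbipPos{G}$ and $\sat{G}\subseteq\nonbipNeg{G}$ by rewriting the Markov generators, and shows that any prime over $\sat{G}$ missing some $x_i+y_i$ must contain all $x_j-y_j$, via the identity $(x_i+y_i)(x_j-y_j)=(x_ix_j-y_iy_j)+(x_jy_i-x_iy_j)$ together with the existence of both an odd and an even $(i,j)$-walk; the characteristic-two statement then falls out of the same computation since there $\nonbipPos{G}=\nonbipNeg{G}$. Your route needs no guessed candidates and yields the characteristic-two multiplicity directly as $|L^{\mathrm{sat}}/L|$; the paper's route stays elementary and never computes the saturation or its characters explicitly.

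The one genuine repair your write-up needs concerns the ground field. \cite[Corollary~2.2]{es96} is proved over an algebraically closed field, and your reassembly principle ``minimal primes of sums of ideals in disjoint variables are sums of minimal primes, and multiplicities multiply'' is false over general fields: in $\QQ[x,y]$ the ideal $\ideal{x^2+1}+\ideal{y^2+1}$ is a sum of primes in disjoint variables whose quotient is $\QQ(i)\otimes_\QQ\QQ(i)\cong\QQ(i)\times\QQ(i)$, hence not even primary. The paper sidesteps this by assuming $\kk$ algebraically closed throughout its proof and observing at the end that all saturations of characters are defined over~$\kk$, so the decomposition descends. Your argument is rescued by the same observation --- every character in sight takes values $\pm1$, so all primes and primary components you write down are defined over the prime field and the relevant quotient rings are geometrically integral (respectively geometrically primary), which is exactly what legitimizes the componentwise tensor-product reassembly --- but this must be said explicitly rather than left implicit.
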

\begin{proof}
Assume first that $\kk$ is algebraically closed.  According to
\cite[Corollary~2.2]{es96}, the primary decomposition $\sat{G}$ is
determined by the saturations of the character that defines the
lattice ideal~$\sat{G}$.  If a graph is disconnected, then its
adjacency matrix has block structure according to the connected
components.  Therefore it suffices to assume that $G$ is connected.
If $G$ is bipartite, then Lemma~\ref{l:smith} and
\cite[Corollary~2.2]{es96} imply that the lattice ideal $\sat{G}$ is
prime.  We are thus left with the case that $G$ is connected and not
bipartite.

Assume first that $\charac(\kk) \neq 2$.  Lemma~\ref{l:smith}
and \cite[Corollary~2.2]{es96} together show that $\sat{G}$ is radical and
has two minimal primes.  We show that these are $\nonbipPos{G}$
and~$\nonbipNeg{G}$.  The first step is
$\sat{G}\subseteq\nonbipPos{G}$ using Proposition~\ref{p:markov}.  Let
$i, j\in\vertices{G}$, then
$x_ix_j-y_iy_j=x_i\cdot(x_j+y_j)-y_j\cdot(x_i+y_i)\in\nonbipPos{G}$
and
$x_iy_j-x_jy_i=x_i\cdot(x_j+y_j)- x_j\cdot(x_i+y_i)\in\nonbipPos{G}$.
Similarly, $\sat{G}\subseteq\nonbipNeg{G}$.
Now let $\pp\supset \sat{G}$ be a prime ideal.  If $\pp$ contains
$x_i + y_i$ for all $i$, then it is either equal to $\nonbipPos{G}$ or
not minimal over~$\sat{G}$.  If there exists a vertex $i$ such that
$x_i + y_i \notin \pp$, then since $G$ has an odd cycle and is
connected, for any vertex $j$ there are both an odd and an even
$(i,j)$-walk.  Thus,
\begin{equation*}
(x_i+y_i)\cdot (x_j-y_j) = x_ix_j  - y_iy_j + x_jy_i -  x_iy_j \in\pp.
\end{equation*}
Since $\pp$ is prime, it contains $x_j - y_j$ for each $j$ and thus
$\nonbipNeg{G} \subset\pp$.  This shows that $\nonbipNeg{G}$ and
$\nonbipPos{G}$ are the minimal primes of~$\sat{G}$.  

If $\charac(\kk) = 2$, then \cite[Corollary~2.2]{es96} gives that
$\sat{G}$ is primary of multiplicity two over a minimal prime which
equals~$\nonbipPos{G} = \nonbipNeg{G}$ by the above computation.
It is now evident that the algebraic closure assumption on $\kk$ is
irrelevant since all saturations of characters are defined over~$\kk$.
\end{proof}

\begin{remark}\label{r:charRadical}
The graph $G$ is bipartite if and only if $\sat{G}$ is prime.
\end{remark}

When decomposing a pure difference binomial ideal, all components
except those over the saturation $\sat{G}$ contain monomials (for a
combinatorial reason see \cite[Example~4.14]{kahle11mesoprimary}).
Our next step is to determine the indeterminates in the minimal primes.  To
this end, for any $S\subset\vertices{G}$ let $G_S$ be the induced
subgraph of $G$ on~$\vertices{G}\setminus S$ and
$\mathfrak{m}_S:=\ideal{x_s,y_s: s\in S}$.

\begin{lemma}\label{l:FormOfMinimalPrimes}
Let $\pp$ be a minimal prime of $\pbei{G}$. Then there exists
$S\subseteq\vertices{G}$ and a minimal prime $\pp'$ of $\sat{G_S}$
such that $\pp=\mathfrak{m}_S+\pp'$.
\end{lemma}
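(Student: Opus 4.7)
The plan is to set $S := \{v \in V(G) : x_v, y_v \in \pp\}$, so $\mathfrak{m}_S \subset \pp$, and to define $\pp' := \pp \cap \kk[x_v, y_v : v \notin S]$. Every $f \in \pp$ splits as $f = g + h$ with $g \in \mathfrak{m}_S$ and $h \in \kk[x_v, y_v : v \notin S]$ (by substituting $x_s = y_s = 0$ for $s \in S$), and $h = f - g \in \pp$ puts $h \in \pp'$, so $\pp = \mathfrak{m}_S + \pp'$. Reducing modulo $\mathfrak{m}_S$, the generators of $\pbei{G}$ incident to $S$ vanish and the rest generate $\pbei{G_S}$, so $\pp'$ is a minimal prime of $\pbei{G_S}$ in $\kk[x_v, y_v : v \notin S]$. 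The crux is to show that $\pp'$ contains none of the indeterminates $x_v, y_v$ for $v \notin S$; once this is done, a standard saturation argument gives $\pp' \supset \sat{G_S}$, and minimality of $\pp'$ over $\pbei{G_S}$ forces minimality over $\sat{G_S}$.

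Suppose for contradiction that $x_i \in \pp'$ for some $i \notin S$ (the case $y_i \in \pp'$ is symmetric), and let $C$ be the connected component of $G_S$ containing $i$. If $C = \{i\}$ is an isolated vertex, then $\pbei{G_S}$ does not involve $x_i, y_i$ at all, so minimality of $\pp'$ precludes $x_i \in \pp'$, a contradiction. If $C$ contains an odd cycle, Remark~\ref{r:oddCycle} yields, for any choice $t_k \in \{x_k, y_k\}$ over the interior vertices of a suitable walk in $C$, a monomial $m = \prod_k t_k$ with $m \cdot (x_i^2 - y_i^2) \in \pbei{G_S}$. Because $v \notin S$ implies that at most one of $x_v, y_v$ lies in $\pp'$, the $t_k$ can be chosen outside $\pp'$, giving $m \notin \pp'$. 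Primality of $\pp'$ then forces $x_i^2 - y_i^2 \in \pp'$; the factorization $(x_i - y_i)(x_i + y_i)$ (valid in any characteristic) combined with $x_i \in \pp'$ yields $y_i \in \pp'$, contradicting $i \notin S$.

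If $C$ is bipartite with at least one edge and parts $V_1 \dot\cup V_2$, $i \in V_1$, I propagate: for each neighbor $j$ of $i$, $x_i x_j - y_i y_j \in \pp'$ combined with $x_i \in \pp'$ and $y_i \notin \pp'$ forces $y_j \in \pp'$, and $j \notin S$ then forces $x_j \notin \pp'$. Iterating along the edges of $C$ yields $x_v \in \pp'$ for all $v \in V_1 \cap C$ and $y_v \in \pp'$ for all $v \in V_2 \cap C$. Since $\pp'$ decomposes along the connected components of $G_S$, its summand at $C$ is a minimal prime of $\pbei{C}$ that strictly contains $\sat{C}$; but $\sat{C}$ is itself prime by Remark~\ref{r:charRadical} and contains no monomials, contradicting minimality. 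The main obstacle is precisely this bipartite subcase: Remark~\ref{r:oddCycle} is unavailable, and one must combine propagation with the primality of $\sat{C}$ to manufacture a strictly smaller prime.
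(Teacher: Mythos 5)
Your overall frame --- take $S=\{v : x_v,y_v\in\pp\}$, split $\pp=\mathfrak{m}_S+\pp'$, show $\pp'$ contains no indeterminates, then saturate --- is reasonable, and in the odd-cycle case you use exactly the trick that drives the paper's proof: multiply a walk binomial by interior variables $t_k$ chosen outside $\pp$ (possible by the maximality of $S$) and invoke primality. The genuine gap is in the bipartite case. The step ``since $\pp'$ decomposes along the connected components of $G_S$, its summand at $C$ is a minimal prime of $\pbei{C}$'' is not a valid general principle: over a field that is not algebraically closed, minimal primes of a sum of ideals in disjoint sets of indeterminates need not be sums of minimal primes of the summands. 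For instance, over $\QQ$ the ideal $\ideal{x^3-1}+\ideal{y^3-1}$ has the minimal prime $\ideal{x^2+x+1,\ y-x}$, which is not of this form, precisely because $\QQ(\omega)\otimes_\QQ\QQ(\omega)$ is not a domain. The decomposition does hold for the ideals at hand, but only because the relevant lattices have at most $2$-torsion and bipartite saturations are prime over every field (the torsion-free Smith normal form of Lemma~\ref{l:smith}); establishing that is essentially the structure theory of the minimal primes of $\pbei{G_S}$ that this lemma is a step toward, so as written your argument is unjustified and borders on circular --- you would need an induction on $|\vertices{G}|$ plus a tensor-product or semigroup-ring argument to legitimize the component splitting. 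A smaller omission in the same step: you assert that the summand at $C$ ``strictly contains $\sat{C}$'' without proving $\sat{C}\subseteq\pp'$. This does follow from your own multiplier trick --- the propagation puts exactly one of $x_v,y_v$ in $\pp'$ for each $v\in C$, so every walk binomial of $\sat{C}$ can be multiplied into $\pbei{C}\subseteq\pp'$ by interior variables avoiding $\pp'$ --- but it needs saying.

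The clean repair is the paper's proof, and it makes your whole case distinction unnecessary: apply the multiplier argument uniformly to every Markov generator of $\sat{G_S}$. Each such generator $b$ corresponds to an $(i,j)$-walk inside $G_S$ (Proposition~\ref{p:markov}), so its interior avoids $S$; by Lemma~\ref{l:Walks}, $t_{k_1}\cdots t_{k_r}\,b\in\pbei{G}\subseteq\pp$ for every choice of interior variables, some choice has $t_{k_1}\cdots t_{k_r}\notin\pp$, and primality gives $b\in\pp$. This yields $\pbei{G}\subseteq\mathfrak{m}_S+\sat{G_S}\subseteq\pp$ directly, and since $\mathfrak{m}_S$ is a monomial prime in indeterminates disjoint from those of $\sat{G_S}$, the minimal primes of $\mathfrak{m}_S+\sat{G_S}$ are exactly the ideals $\mathfrak{m}_S+\pp'$ with $\pp'$ minimal over $\sat{G_S}$ --- here there is no tensor subtlety, because the quotient by $\mathfrak{m}_S$ merely deletes variables. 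Minimality of $\pp$ over $\pbei{G}$ then forces $\pp=\mathfrak{m}_S+\pp'$. In short: bypass the minimal primes of $\pbei{G_S}$ altogether rather than classifying where indeterminates can sit inside them.
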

\begin{proof}
Let $S:=\{s\in\vertices{G}: x_s,y_s\in\pp\}$.  We first show the
inclusions
\begin{equation*}
\pbei{G}\subseteq \mathfrak{m}_S + \sat{G_S} \subseteq \pp.
\end{equation*}
The first inclusion is clear, while for the second, it suffices to
check that $\sat{G_S} \subseteq \pp$. Generators of $\sat{G_S}$
correspond to $(i,j)$-walks in $G_S$ according to
Proposition~\ref{p:markov}. Let $b$ be the binomial corresponding to
any such walk, and let
$\{k_1,\dots,k_r\} \subseteq\vertices{G}\setminus S$ be its
interior. By Lemma~\ref{l:Walks},
$t_{k_1}\cdots t_{k_r}\cdot b\in \pbei{G}\subseteq \pp$ for any choice
of indeterminates $t_{k_l} \in \{x_{k_l},y_{k_l}\}$, with~
$1\leq l \leq r$.  By the construction of $S$, there exists some
choice such that $t_{k_1}\cdots t_{k_r} \notin \pp$. Since $\pp$ is
prime, $b\in \pp$.  The minimal primes of
$\mathfrak{m}_S + \sat{G_S}$ arise as sums of
$\mathfrak{m}_S$ and minimal primes of~$\sat{G_S}$.  By
minimality,~$\pp$ equals $\mathfrak{m}_S+\pp'$ for some
minimal prime $\pp'$ of $\sat{G}$.
\end{proof}

Not all primes of the form $\mathfrak{m}_S+\pp'$ are minimal
over~$\pbei{G}$ (see
Example~\ref{ex:ChoicesOfBinomialPartOfSaturation}).  As for binomial
edge ideals, cut points play a crucial role in determining the sets
$S$ which lead to minimal primes, but for parity binomial edge ideals
we count connected components differently.  The bipartite ones count
double.

\begin{defn}\label{d:Disconnectors}
Let $\sComp(G)=\bComp(G)+\cComp(G) = 2\bComp(G) + \nbComp(G)$.  A set
$S\subset\vertices{G}$ is a \emph{disconnector} of~$G$ if
$\sComp(G_S)>\sComp(G_{S\setminus {\{s\}}})$ for every $s\in S$.
\end{defn}

\begin{remark}\label{r:EmptySetIsDisconnector}
The empty set is a disconnector of any graph, and disconnectors cannot
contain isolated vertices.
\end{remark}

\begin{remark}\label{r:DisconnectorsAndWalkParity}
If a graph $G$ has no isolated vertices, then
$\sComp(G_{\{s\}})\ge\sComp(G)$ for all $s\in\vertices{G}$ and
according to Definition~\ref{d:Disconnectors} a vertex $s$ is a
disconnector of~$G$ exactly if the inequality is strict.  Moreover,
one can conclude from the following proposition that $s$ is a
disconnector of $G$ if and only if
$\sat{G}\not\subset\mathfrak{m}_{\{s\}}+\sat{G_{\{s\}}}$.
\end{remark}

\begin{prop}\label{p:SubgraphsAndSaturation}
Let $G$ be a graph and $S\subseteq\vertices{G}$. Then
$\sat{G}\subset\mathfrak{m}_S+\sat{G_S}$ if and only if for
all $(i,j)$-walks in $G$ with $i,j\in\vertices{G_S}$, there is an
$(i,j)$-walk in $G_S$ of the same parity.
\end{prop}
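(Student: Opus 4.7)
The plan is to prove each direction of the biconditional by reducing to Proposition~\ref{p:markov}, which describes $\sat{G}$ and $\sat{G_S}$ via walk binomials.

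For the implication $(\Leftarrow)$, I would check the inclusion on each walk-binomial generator of $\sat{G}$, coming from some $(i,j)$-walk $P$ in $G$. If either endpoint lies in $S$, the generator carries a factor $x_s$ or $y_s$ with $s\in S$ and thus lies in $\mathfrak{m}_S$. Otherwise $i,j\in\vertices{G_S}$, and the hypothesis produces an $(i,j)$-walk in $G_S$ of the same parity; its walk binomial depends only on the endpoints and the parity, coincides with the original generator, and hence lies in $\sat{G_S}$. This verifies the desired inclusion on a generating set.

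For the implication $(\Rightarrow)$, I would take an $(i,j)$-walk $P$ in $G$ with $i,j\in\vertices{G_S}$ and let $b$ be its walk binomial. By Proposition~\ref{p:markov}, $b\in \sat{G}\subset \mathfrak{m}_S+\sat{G_S}$. Since both monomials of $b$ are supported on indeterminates indexed by $\vertices{G_S}$, whereas $\mathfrak{m}_S$ is generated by indeterminates indexed by~$S$, writing $b=m+s$ with $m\in\mathfrak{m}_S$ and $s\in\sat{G_S}$ forces $m=0$ (every monomial of $m$ contains an $S$-variable while $b$ contains none), yielding $b\in\sat{G_S}$. Now $\sat{G_S}$ is the lattice ideal of the Lawrence lifting of $\im(A_{G_S})$, so $b$ lies in it iff its exponent difference lies in that lattice; equivalently, $e_i+e_j\in\im(A_{G_S})$ in the odd case, or $e_i-e_j\in\im(A_{G_S})$ in the even case.

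To convert this lattice membership back into a walk I would apply Proposition~\ref{p:graver} to the graph $G_S$: the Graver basis of $\im(A_{G_S})$ consists of the $\pm(e_u\pm e_v)$ associated to odd and even walks in~$G_S$. By the conformal decomposition property of Graver bases, any lattice element is a non-negative integer combination of Graver elements conformal to it. Graver elements here have $\ell_1$-norm at most~$2$, so no cancellation is possible at coordinates outside the support of the target; the only conformal decomposition of $e_i+e_j$ (respectively $e_i-e_j$) is thus a single Graver element $e_i+e_j$ (respectively $e_i-e_j$) of~$G_S$, which is precisely an odd (respectively even) $(i,j)$-walk in~$G_S$. The delicate step is this final conformal decomposition argument — handling the potentially degenerate case $i=j$ (where $2e_i$ forces an odd closed walk through $i$) requires the same support-and-norm analysis; the rest of the proof is routine bookkeeping with the generators from Proposition~\ref{p:markov}.
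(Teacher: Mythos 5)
Your argument is correct and is essentially the paper's own proof: the backward direction checks the inclusion on the walk-binomial generators of Proposition~\ref{p:markov} exactly as the paper does, and the forward direction likewise passes from $b\in\mathfrak{m}_S+\sat{G_S}$ to $b\in\sat{G_S}$ by a support argument and then recovers a same-parity walk in $G_S$ from the Graver description of Proposition~\ref{p:graver}, your explicit conformal-decomposition step (including the degenerate $2e_i$ case) merely spelling out what the paper compresses into ``it follows that $m$ is an element of the Graver basis of $\sat{G_S}$''. One small repair: the parenthetical ``forces $m=0$'' is not literally valid, since $s\in\sat{G_S}$, viewed as an element of the ideal it generates in $\kk[\xx,\yy]$, may itself involve $S$-indeterminates and cancel against terms of $m$; the intended conclusion $b\in\sat{G_S}$ follows instead by applying the retraction $x_s,y_s\mapsto 0$ for $s\in S$, which annihilates $\mathfrak{m}_S$, maps $\sat{G_S}$ into itself, and fixes~$b$.
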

\begin{proof}
Let $\sat{G}\subset\mathfrak{m}_S+\sat{G_S}$. Let
$m\in\sat{G}$ be a Graver move corresponding to an $(i,j)$-walk in $G$
with $i,j\not\in S$. Since
$m \in \kk[x_i,x_j,y_i,y_j]$, and no polynomial in $\sat{G_S}$ uses
indeterminates from $S$, we find $m \in \sat{G_S}$.  It follows that $m$ is
an element~of the Graver basis of $\sat{G_S}$ and thus corresponds to
an $(i,j)$-walk in~$G_S$ of the same~parity.

On the other hand, let $m\in\sat{G}$ be a move corresponding to a
$(i,j)$-walk in~$G$.  If $i\in S$ or $j\in S$, then
$m\in\mathfrak{m}_S$. If otherwise $i,j\in\vertices{G_S}$,
then $m\in\sat{G_S}$ by~assumption.
\end{proof}

The next lemma states that the indeterminates contained in a minimal
prime correspond to a disconnector of~$G$, and
Theorem~\ref{t:MinimalPrimesPBEI} below says when the converse is true
as well.

\begin{lemma}\label{l:VariableSetsOfMinimalPrimes}
Let $\pp$ be a minimal prime of~$\pbei{G}$.  There exists a
disconnector $S\subset\vertices{G}$ of $G$ and a minimal prime $\pp'$ of
$\sat{G_S}$ such that $\pp=\mathfrak{m}_S+\pp'$.
\end{lemma}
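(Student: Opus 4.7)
The plan is to build on Lemma~\ref{l:FormOfMinimalPrimes}, which already gives a decomposition $\pp = \mathfrak{m}_S + \pp'$ for some $S \subseteq V(G)$ and a minimal prime $\pp'$ of $\sat{G_S}$; the only remaining content is that $S$ must be a disconnector. I will argue by contrapositive: assuming $S$ is not a disconnector, I will exhibit a prime strictly contained in $\pp$ that still contains $\pbei{G}$, contradicting minimality.

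The combinatorial input is the following key inclusion. Let $s \in S$ witness the failure of the disconnector property, so $\sComp(G_S) \leq \sComp(G_{S \setminus \{s\}})$, and set $S' := S \setminus \{s\}$. I claim
\[
\sat{G_{S'}} \ \subseteq \ \mathfrak{m}_{\{s\}} + \sat{G_S}.
\]
When $s$ is isolated in $G_{S'}$ this is immediate, since then $\sat{G_{S'}}$ and $\sat{G_S}$ coincide as ideals in $\kk[\xx,\yy]$ by Proposition~\ref{p:markov}. Otherwise one applies Remark~\ref{r:DisconnectorsAndWalkParity} to the graph $G_{S'}$ (after removing any remaining isolated vertices, which shift $\sComp$ by an amount common to both $G_{S'}$ and $G_S$ and therefore do not affect the inequality): the hypothesis says $s$ is not a disconnector of $G_{S'}$, so the inclusion follows from Proposition~\ref{p:SubgraphsAndSaturation}.

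Given this inclusion,
\[
\pbei{G} \ \subseteq\ \mathfrak{m}_{S'} + \sat{G_{S'}} \ \subseteq\ \mathfrak{m}_{S'} + \mathfrak{m}_{\{s\}} + \sat{G_S} \ =\ \mathfrak{m}_S + \sat{G_S} \ \subseteq\ \pp,
\]
where the first inclusion is the usual walk-monomial argument from Lemma~\ref{l:FormOfMinimalPrimes}. The prime ideal $\mathfrak{m}_{\{s\}} + \pp'$ is a sum of primes in disjoint indeterminates and contains $\sat{G_{S'}}$, so it contains some minimal prime $\qq$ of $\sat{G_{S'}}$. Then $\pp'' := \mathfrak{m}_{S'} + \qq$ is prime, contains $\pbei{G}$, and satisfies $\pp'' \subseteq \mathfrak{m}_{S'} + \mathfrak{m}_{\{s\}} + \pp' = \pp$. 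By Proposition~\ref{p:decompSat} the minimal primes of $\sat{G_{S'}}$ contain no monomials, hence $x_s \notin \qq$; since also $s \notin S'$, we have $x_s \notin \pp''$ while $x_s \in \pp$, so $\pp'' \subsetneq \pp$, contradicting the minimality of $\pp$.

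The main obstacle is the key inclusion of the second paragraph: translating the combinatorial inequality on $\sComp$ into an algebraic containment requires tracking how adding $s$ back to $G_S$ can merge components, possibly destroy bipartiteness, and thereby open up new walk parities. The bookkeeping is organized by Definition~\ref{d:Disconnectors}'s double counting of bipartite components, and delegating the equivalence to Remark~\ref{r:DisconnectorsAndWalkParity} and Proposition~\ref{p:SubgraphsAndSaturation} keeps the present proof short; handling the possible isolated vertices in $G_{S'}$ is the only step that needs explicit care.
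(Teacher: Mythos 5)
Your proof is correct and follows essentially the same route as the paper's: starting from Lemma~\ref{l:FormOfMinimalPrimes}, using Remark~\ref{r:DisconnectorsAndWalkParity} and Proposition~\ref{p:SubgraphsAndSaturation} to obtain $\sat{G_{S\setminus\{s\}}}\subseteq\mathfrak{m}_{\{s\}}+\sat{G_S}\subseteq\mathfrak{m}_{\{s\}}+\pp'$, and then picking a minimal prime of $\sat{G_{S\setminus\{s\}}}$ inside this prime to build a prime over $\pbei{G}$ strictly below $\pp$. Your only additions — explicitly handling isolated vertices when invoking Remark~\ref{r:DisconnectorsAndWalkParity} and verifying strictness via $x_s\notin\mathfrak{m}_{S'}+\qq$ — are points the paper leaves implicit, and both are handled correctly.
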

\begin{proof}
Let $S$ and $\pp'$ be as in Lemma~\ref{l:FormOfMinimalPrimes}.  We
prove that $S$ is a disconnector.  Assume the converse, i.e., there
exists $s\in S$ such that $\{s\}$ is not a disconnector of
$G_{S\setminus\{s\}}$.  According to
Remark~\ref{r:DisconnectorsAndWalkParity} and
Proposition~\ref{p:SubgraphsAndSaturation},
\begin{equation*}
\sat{G_{S\setminus\{s\}}}\subseteq\mathfrak{m}_{\{s\}}+\sat{G_S}\subseteq\mathfrak{m}_{\{s\}}+\pp'.
\end{equation*}
Hence, since the ideal on the right-hand side is prime, choose a
minimal prime $\pp''$ of $\sat{G_{S\setminus\{s\}}}$ such that
$\sat{G_{S\setminus\{s\}}}\subseteq \pp''\subsetneq
\mathfrak{m}_{\{s\}}+\pp'$.  This give rise to
\begin{equation*}
\pbei{G}\subset\mathfrak{m}_{S\setminus\{s\}}+\pp''\subsetneq\mathfrak{m}_{S}+\pp' = \pp
\end{equation*}
which contradicts the minimality of $\pp$. 
\end{proof}

Let $S\subseteq\vertices{G}$ be a disconnector of~$G$.  The induced
subgraph $G_S$ splits into bipartite components
$B_1,\dots,B_{\bComp(G_S)}$ and non-bipartite components
$N_1,\dots,N_{\nbComp(G_S)}$.  By Proposition~\ref{p:decompSat} the
minimal primes of $\sat{G_S}$ are
\begin{equation}\label{eq:satprimes}
\pp=\sum_{i=1}^{\bComp(G_S)}\sat{B_i} +
\sum_{i=1}^{\nbComp(G_S)}\pp^{\sigma_i}(N_i), \text{ where } \begin{cases}
\sigma_i \in \{+,-\}, & \text{ if }\charac(\kk)\neq 2,\\
\sigma_i = +, & \text{ if }\charac(\kk)= 2.
\end{cases}
\end{equation}
Not all of these primes lead to minimal primes of $\pbei{G}$ because
of the following effect.

\begin{example}
\label{ex:ChoicesOfBinomialPartOfSaturation}
Let $G$ be the graph in
Figure~\ref{fig:ChoicesOfBinomialPartOfSaturation}.
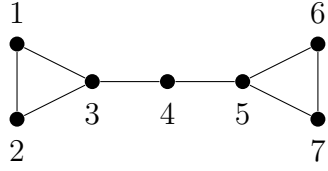
\begin{figure}[bt]
	\begin{tikzpicture}[xscale=0.5,yscale=0.5]
		\node [label={[label distance=1pt]270:$2$},fill, circle, inner sep=2pt](a) at (0,0) {};
		\node [label={[label distance=1pt]90:$1$},fill, circle, inner sep=2pt](b) at (0,2) {};
		\node [label={[label distance=1pt]270:$3$},fill, circle, inner sep=2pt](c) at (2,1) {};
		\node [label={[label distance=1pt]270:$4$},fill, circle, inner sep=2pt](d) at (4,1) {};
		\node [label={[label distance=1pt]270:$5$},fill, circle, inner sep=2pt](e) at (6,1) {};
		\node [label={[label distance=1pt]270:$7$},fill, circle, inner sep=2pt](f) at (8,0) {};
		\node [label={[label distance=1pt]90:$6$},fill, circle, inner sep=2pt](g) at (8,2) {};

		\draw (a)--(b)--(c)--(a);
		\draw (e)--(f)--(g)--(e);
		\draw (c) --(d) --(e);

\end{tikzpicture}
\caption{A graph for which one of the primes in \eqref{eq:satprimes}
is not a minimal prime.}\label{fig:ChoicesOfBinomialPartOfSaturation}
\end{figure}
The vertex $4$ is a disconnector, and $G_{\{4\}}$ consists of the two
triangles $N_1=\{1,2,3\}$ and $N_2=\{5,6,7\}$.  Choosing for both
triangles the positive sign component, we obtain the prime ideal
\begin{equation*}
\mathfrak{m}_{\{4\}}+\nonbipPos{N_1}+\nonbipPos{N_2}=\mathfrak{m}_{\{4\}}+\ideal{x_i+y_i:
i\in[7]\setminus\{4\}} 
\end{equation*} 
which is not minimal over $\pbei{G}$ since it contains the prime ideal
$\nonbipPos{G}$.  On the other hand, both ideals with the binomial
part $\mathfrak{m}_{\{4\}}+\pp^\pm(N_1)+\pp^\mp(N_2)$, each having different
signs on the triangles, are minimal over~$\pbei{G}$.
\end{example}

A combinatorial condition on $\sigma$ in \eqref{eq:satprimes}
guarantees that a minimal prime of $\sat{G_S}$ is the binomial part of
a minimal prime of~$\pbei{G}$ (the monomial part being
$\mathfrak{m}_S$).  To see it, let $s\in S$ be such that
$\cComp(G_S)>\cComp(G_{S\setminus\{s\}})$, i.e., when adding $s$ back
to $G_S$ some of its connected components are joined.  Denote by
$\cC_{G_S}(s)$ the set of only those connected components of $G_S$
which are joined when adding~$s$.

\begin{defn}\label{d:sign-split}
Let $S\subset\vertices{G}$ be a disconnector of~$G$.  A minimal prime
$\pp$ of~$\sat{G_S}$ is \emph{sign-split} if for all $s\in S$ such
that $\cC_{G_S}(s)$ contains no bipartite graphs, the prime summands of
$\pp$ corresponding to connected components in $\cC_{G_S}(s)$ are not
all equal to $\pp^+$ or all equal to~$\pp^-$.
\end{defn}

\begin{remark}
If $\cC_{G_S}(s)$ contains at least one bipartite graph, then
Definition~\ref{d:sign-split} imposes no restriction and every choice
of prime summands is sign-split.
\end{remark}

\begin{remark}\label{r:charNoSplit}
If $\charac(\kk) = 2$, then all signs $\sigma$ in \eqref{eq:satprimes}
are fixed.  In this case, Definition~\ref{d:sign-split} can only be
satisfied if $\cC_{G_S}(s)$ contains a bipartite component for each
$s\in S$.
\end{remark}

\begin{example}\label{e:NotAllDisconnectorsContributeMinP}
Not every disconnector $S\subset\vertices{G}$ of~$G$ admits a
sign-split minimal prime for $\sat{G_S}$, and thus not every
disconnector contributes minimal primes to~$\pbei{G}$.
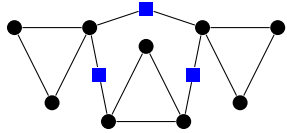
\begin{figure}[htbp]
	\begin{tikzpicture}[xscale=0.5,yscale=0.5]
		\node [fill, circle, inner sep=2pt](a) at (1,2) {};
		\node [fill, circle, inner sep=2pt](b) at (2,0) {};
		\node [fill, circle, inner sep=2pt](c) at (3,2){};

		\node [fill, circle, inner sep=2pt](g) at (4.5,1.5){};
		\node [fill, circle, inner sep=2pt](h) at (3.5,-0.5){};
		\node [fill, circle, inner sep=2pt](i) at (5.5,-0.5){};

		\node [fill, circle, inner sep=2pt](d) at (6,2) {};
		\node [fill, circle, inner sep=2pt](e) at (7,0) {};
		\node [fill, circle, inner sep=2pt](f) at (8,2){};

		\node [fill=blue, rectangle, scale=1.3, inner sep=2pt](j) at (3.25,0.75){};
		\node [fill=blue, rectangle, scale=1.3, inner sep=2pt](k) at (5.75,0.75){};
		\node [fill=blue, rectangle, scale=1.3, inner sep=2pt](l) at (4.5,2.5){};

   \draw (c) -- (j) -- (h);
   \draw (d) -- (k) -- (i);

   \draw (c) -- (l) -- (d);

   \draw (a)--(b)--(c)--(a);
   \draw (d)--(e)--(f)--(d);
   \draw (g)--(h)--(i)--(g);
	\end{tikzpicture}
	\caption{\label{f:NoSignSplit}A disconnector whose binomial
	parts cannot be sign-split.}
\end{figure}
Consider the graph in Figure~\ref{f:NoSignSplit}.  The set of blue
square vertices is a disconnector that does not contribute minimal
primes.  Adding one of the squares back yields the requirement that
the primes on the two now connected triangles have different signs,
but these three requirements cannot be satisfied simultaneously.
\end{example}

\begin{thm}\label{t:MinimalPrimesPBEI}
The minimal primes of $\pbei{G}$ are the ideals
$\mathfrak{m}_S+\pp$, where $S\subseteq\vertices{G}$ is a
disconnector of $G$ and $\pp$ is a sign-split minimal prime
of~$\sat{G_S}$.
\end{thm}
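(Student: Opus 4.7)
The plan is to combine Lemma~\ref{l:VariableSetsOfMinimalPrimes}, which already forces every minimal prime of $\pbei{G}$ into the form $\mathfrak{m}_S+\pp$ with $S$ a disconnector and $\pp$ a minimal prime of $\sat{G_S}$, with a characterization showing that such a prime is minimal precisely when $\pp$ is sign-split. I treat the two implications separately.

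Assume first that $\pp$ is sign-split. For contradiction, suppose a prime $\qq\subsetneq\mathfrak{m}_S+\pp$ contains $\pbei{G}$. Passing to a minimal prime below $\qq$, Lemma~\ref{l:VariableSetsOfMinimalPrimes} gives $\qq=\mathfrak{m}_{S'}+\pp'$ with $S'$ a disconnector and $\pp'$ a minimal prime of $\sat{G_{S'}}$. Since $\pp$ contains no indeterminates, $S'\subset S$, and since minimal primes of $\sat{G_S}$ are incomparable in characteristic $\neq 2$ and unique in characteristic~$2$ by Proposition~\ref{p:decompSat}, strict containment forces $S'\subsetneq S$. Fix $s\in S\setminus S'$; let $C$ be the connected component of $G_{S'}$ containing $s$ and let $C_1,\dots,C_k$ be the components of $G_S$ contained in $C$. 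I then case-split on the bipartiteness of $C$ in $G_{S'}$. If $C$ is non-bipartite, the summand of $\pp'$ on $C$ is $\pp^\tau(C)$ for some $\tau\in\{+,-\}$, so each linear generator $x_i\pm y_i$ of it lies in $\pp'$ for every $i\in C$. Such a form lies neither in $\sat{C_j}$ (when $C_j$ is bipartite) nor in $\pp^{-\tau}(C_j)$ (when $C_j$ is non-bipartite of opposite sign, in characteristic $\neq 2$), so each $C_j\subset C$ is non-bipartite and carries sign $\tau$. A disconnector-count then forces $k\geq 2$---otherwise a single non-bipartite $C_1$ would leave $\sComp$ unchanged when $s$ is added back---and $\cC_{G_S}(s)\subset\{C_1,\dots,C_k\}$ is a set of non-bipartite components sharing the common sign $\tau$, violating sign-splitness at $s$. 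If $C$ is bipartite, every $C_j\subset C$ is bipartite; the same disconnector-count gives $k\geq 2$ because a single bipartite $C_1$ joined to $s$ would stay bipartite and leave $\sComp$ unchanged. Picking $v\in C_1$, $w\in C_2$ and any $(v,w)$-walk in $C$, the associated walk binomial lies in $\sat{C}\subset\pp'$ but, since its monomials mix the disjoint variable sets of $C_1$ and $C_2$, does not lie in the prime $\sat{C_1}+\sat{C_2}\subset\pp$ nor in $\mathfrak{m}_S$---a contradiction.

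For the converse, assume $\pp$ is not sign-split and fix a witness $s\in S$ for which $\cC_{G_S}(s)=\{N_1,\dots,N_k\}$ consists only of non-bipartite components all carrying the same sign $\tau$ in $\pp$. Let $N:=\{s\}\cup N_1\cup\cdots\cup N_k$, which is a non-bipartite connected subgraph of $G_{S\setminus\{s\}}$, and obtain $\pp'$ from $\pp$ by replacing the summands $\pp^\tau(N_j)$ by the single summand $\pp^\tau(N)$; Proposition~\ref{p:decompSat} identifies $\pp'$ as a minimal prime of $\sat{G_{S\setminus\{s\}}}$. Each generator of $\pp^\tau(N)$ lies in $\pp$ for $i\in N_j$ and in $\mathfrak{m}_{\{s\}}\subset\mathfrak{m}_S$ for $i=s$, so $\mathfrak{m}_{S\setminus\{s\}}+\pp'\subset\mathfrak{m}_S+\pp$; the containment is strict because $x_s\in\mathfrak{m}_S+\pp$ is absent from the smaller ideal. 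This exhibits a strictly smaller prime over $\pbei{G}$, so $\mathfrak{m}_S+\pp$ is not minimal.

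The main obstacle will be the $\sComp$-bookkeeping in the forward direction: one must argue cleanly that a disconnecting vertex $s\in S$ either merges at least two components of $G_S$ or changes the bipartiteness of a single component, and then match this combinatorial dichotomy against the bipartite/non-bipartite status of the component $C$ of $G_{S'}$ containing $s$. Once this case split is executed precisely, the algebraic verifications---which linear forms or walk binomials escape which summand of $\pp$---follow directly from Proposition~\ref{p:decompSat} and the lattice-ideal structure of $\sat{H}$.
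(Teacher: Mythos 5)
Your proposal is correct, and it follows the paper's overall architecture—Lemma~\ref{l:VariableSetsOfMinimalPrimes} to fix the shape $\mathfrak{m}_S+\pp$, then the two implications—with the converse direction essentially identical to the paper's: both replace $\mathfrak{m}_{\{s\}}+\sum_j\pp^{\tau}(N_j)$ by the single summand $\pp^{\tau}(N)$ on the merged component and observe the strict drop (your version states cleanly the strict inclusion that is garbled by a typo in the paper's displayed formula at that step). The forward direction, however, runs differently. The paper adds back a single vertex $s'$, uses only the containment $\sat{G_{S\setminus\{s'\}}}\subseteq\tilde{\pp}$, and splits into three subcases with explicit witness binomials escaping $\mathfrak{m}_S+\pp$: an element $x_i^2-y_i^2$ when a bipartite component becomes non-bipartite, a mixed walk binomial when two bipartite components are joined, and an odd-walk binomial between two opposite-sign components supplied by sign-splitness. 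You instead pass to the full component $C$ of $G_{S'}$ containing $s$ and exploit the summand structure of $\pp'$ given by Proposition~\ref{p:decompSat}: when $C$ is non-bipartite, the linear generators of $\pp^{\tau}(C)$, pushed into $\pp$, force every component of $G_S$ inside $C$ to be non-bipartite with the uniform sign $\tau$, and the $\sComp$-count then contradicts sign-splitness at $s$ directly; this unifies the paper's first and third subcases, leaving only the bipartite-$C$ case, where your walk-binomial argument matches the paper's. The paper's route buys independence from the structure of the intermediate prime; yours buys a two-case rather than three-case analysis, at the cost of invoking Proposition~\ref{p:decompSat} for $\pp'$. Two spots to tighten, neither fatal: (i) the inferences ``$f\in\mathfrak{m}_S+\pp$ for $f$ in the non-$S$ variables implies $f$ lies in the relevant summand of $\pp$'' need the one-line variable-separation argument (kill the $S$-variables, then use that $\pp$ is a sum of primes in disjoint variable sets); (ii) your disconnector count must be applied to $\cC_{G_S}(s)$, the components of $G_S$ adjacent to $s$, not to all $k$ components of $G_S$ inside $C$—since $\cC_{G_S}(s)\subseteq\{C_1,\dots,C_k\}$ the argument does go through, but $k\geq 2$ by itself is not the statement $|\cC_{G_S}(s)|\geq 2$ that sign-splitness at $s$ actually requires.
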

\begin{proof}
According to Lemma~\ref{l:VariableSetsOfMinimalPrimes}, all minimal
primes of $\pbei{G}$ have the form~$\mathfrak{m}_S+\pp$, where
$S\subset\vertices{G}$ is a disconnector and $\pp$ is a minimal prime
of~$\sat{G_S}$.  We first show that if $\pp$ is sign-split, this ideal
is minimal over~$\pbei{G}$.  Assume not, then by
Lemma~\ref{l:FormOfMinimalPrimes} there exists a set
$T\subseteq\vertices{G}$ and a minimal prime $\tilde{\pp}$ of
$\sat{G_T}$ such that
\begin{equation}\label{equ:InclusionMinimalPrimes}
\pbei{G}\subseteq\mathfrak{m}_T+\tilde{\pp}\subsetneq
\mathfrak{m}_S+\pp.  
\end{equation} 
This implies $T\subsetneq S$, since if $T=S$, then by
Lemma~\ref{l:FormOfMinimalPrimes} also $\tilde{\pp}=\pp$.  Let
$s'\in S\setminus T$, then
$G_S\subsetneq G_{S\setminus\{s'\}}\subseteq G_T$.  Since $s'$ is a
disconnector of $G_{S\setminus\{s'\}}$,
$\sComp(G_{S})>\sComp(G_{S\setminus\{s'\}})$.  Let again
$\cC_{G_S}(s')$ be the set of connected components in $G_S$ that are
joined to $s'$ in $G_{S\setminus\{s'\}}$.
If $\cC_{G_S}(s')$ contains at least one bipartite component, adding
$s'$ to $G_S$ either this component becomes non-bipartite in
$G_{S\setminus\{s'\}}$ or it is joined to another bipartite component
of~$G_S$.  In the first case, let $B$ be a bipartite component which
becomes non-bipartite.  There exists $i\in\vertices{B}$ such that
$x_i^2-y_i^2\in\sat{G_{S\setminus\{s'\}}}\subset\sat{G_T}\subset\tilde{\pp}$,
but $x_i^2-y_i^2\not\in\sat{B}$.  Since $\sat{B}$ is a summand of
$\pp$, $x_i^2-y_i^2\not\in\mathfrak{m}_S+\pp$, in
contradiction to \eqref{equ:InclusionMinimalPrimes}. In the second
case, let $B_1$ and $B_2$ be the bipartite components of $G_S$ which
are joined to~$s'$.  There are $i_1\in\vertices{B_1}$ and
$i_2\in\vertices{B_2}$ such that there exists an $(i_1,i_2)$-walk
in~$G_{S\setminus\{s'\}}$.  Independent of the parity of this walk,
the corresponding Markov move is not contained in
$\sat{B_1}+\sat{B_2}$ since there is no applicable move from the
Graver basis. Since $\sat{B_1}$ and $\sat{B_2}$ are summands of $\pp$
involving the indeterminates $i_1$ and $i_2$, there is a binomial which is
not in $\mathfrak{m}_S+\pp$ but in
$\sat{G_{S\setminus\{s'\}}}\subset\tilde{\pp}$
contradicting~\eqref{equ:InclusionMinimalPrimes}.

Assume now that all components in $\cC_{G_S}(s')$ are non-bipartite
(there are at least two of them since $s'$ is a disconnector).  By
assumption, $\pp$ is sign-split, i.e., there exist distinct
components $N_1,N_2\in\cC_{G_S}(s)$ such that $\nonbipPos{N_1}$ and
$\nonbipNeg{N_2}$ are summands of~$\pp$.  There is an odd walk from a
vertex $i_1\in V(N_1)$ to a vertex $i_2\in V(N_2)$ in
$G_{S\setminus\{s'\}}$, and therefore,
$x_{i_1}x_{i_2}-y_{i_1}y_{i_2}\in\sat{G_{S\setminus\{s'\}}}\subset\tilde{\pp}$.
Since
\begin{equation*}
x_{i_1}x_{i_2}-y_{i_1}y_{i_2}\not\in\nonbipPos{N_1}+\nonbipNeg{N_2},
\end{equation*}
also $x_{i_1}x_{i_2}-y_{i_1}y_{i_2}\not\in\pp$.  By construction,
$i_1,i_2\not\in S$ and thus
\begin{equation*}
	x_{i_1}x_{i_2}-y_{i_1}y_{i_2}\not\in\mathfrak{m}_S+\pp
\end{equation*}
which contradicts~\eqref{equ:InclusionMinimalPrimes}. This shows
minimality of $\mathfrak{m}_S+\pp$.

Let now $\mathfrak{m}_S+\pp$ be a minimal prime of~$\pbei{G}$.
The set $S$ is a disconnector by
Lemma~\ref{l:VariableSetsOfMinimalPrimes} and thus it remains to prove
that $\pp$ is sign-split.  To the contrary, assume there is a vertex
$s\in S$ with $\cComp(G_{S\setminus\{s\}})>\cComp(G_S)$ such that
$\cC_{G_S}(s) = \{N_1,\ldots,N_k\}$ consists exclusively of
non-bipartite components, $k \ge 2$, and all summands of $\pp$
corresponding to $N_i$ have the same sign, say~$+$.  When adding $s$
back to $G_{S}$, the components in $\cC_{G_S}(s)$ are joined to a
single, non-bipartite connected component $H$ in
$G_{S\setminus\{s\}}$, whereas all other components of $G_S$ coincide
with connected components of~$G_{S\setminus\{s\}}$.  Since
\begin{equation*}
\nonbipPos{H}=\mathfrak{m}_{\{s\}}+\sum_{i=1}^k\nonbipPos{N_i}\subsetneq\mathfrak{m}_{\{s\}}+\sum_{i=1}^k\nonbipPos{N_i},
\end{equation*}
choosing on all other components of $G_{S\setminus\{s\}}$ the same
prime component as in $G_S$, we obtain a prime ideal that is strictly
smaller than~$\mathfrak{m}_S+\pp$.
\end{proof}

\begin{remark}
Example~\ref{ex:ChoicesOfBinomialPartOfSaturation} and
Definition~\ref{d:sign-split} are valid independent of $\charac(\kk)$.  In
the above proof, the case of $\charac(\kk) = 2$ could be simplified,
but everything works in general without the need for a case
distinction.
\end{remark}

\section{Radicality and mesoprimary decomposition}
\label{sec:Radicality}

The intersection of the minimal primes of $\pbei{G}$ depends on
$\charac(\kk)$ so that we do not attempt to compute it directly.
Theorem~\ref{t:radical} below says that $\pbei{G}$ is radical if the
characteristic is not two.  Here is the principal source of field
dependence (see also~\cite[Theorem~1.2]{Herzog2010}).

\begin{remark}\label{r:RadicalityOverFiniteField}
Fix a field $\kk$ with $\charac(\kk)=2$.  The parity binomial edge
ideal $\pbei{G}$ is radical in $\kk[\xx,\yy]$ if and only if $G$ is
bipartite. Clearly, if $G$ is bipartite, then $\pbei{G}$ is radical by
Remark~\ref{r:bipartite-bei}.  Conversely, let $(i_1,\ldots,i_{r+1})$
with $i_{r+1}=i_1$ be an odd cycle in $G$. According to
Lemma~\ref{l:Walks}, $((x_{i_1}-y_{i_1})y_{i_2}\cdots
y_{i_{r}})^2=(x_{i_1}^2-y_{i_1}^2)y_{i_2}^2\cdots
y_{i_{r}}^2\in\pbei{G}$.  The only possible reduced binomials whose
leading monomials divide $x_{i_1}y_{i_2}\cdots y_{i_{r}}$ correspond
to minimal even $(i_1,i_k)$-walks in $G[\{i_1,\ldots,i_{r}\}]$ with
$k\in\{2,\ldots,r\}$. Replacements coming from these binomials lead to
monomials where $x_{i_1}$ is replaced by $y_{i_1}$ and $y_{i_k}$ is
replaced by $x_{i_k}$. Thus, $x_{i_1}y_{i_2}\cdots
y_{i_{r}}\not\equiv_{\pbei{G}}y_{i_1}y_{i_2}\cdots y_{i_{r}}$ and
hence $\pbei{G}$ is not radical.
\end{remark}

\begin{remark}\label{r:multihomog}
The ideal $\pbei{G}$ is homogeneous with respect to the multigrading
$\deg(x_i)=\deg(y_i)=e_i$, where $e_i$ is the $i$-th standard basis
vector of~$\RR^{|\vertices{G}|}$.
\end{remark}

\begin{lemma}\label{l:bothMons}
Let $i\in \vertices{G}$ and $m\in\pbei{G} + \mathfrak{m}_{\{i\}}$ be a
monomial.  Then $m\in\mathfrak{m}_{\{i\}}$.
\end{lemma}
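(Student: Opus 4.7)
The plan is to pass to the quotient by $\mathfrak{m}_{\{i\}}$ and show that the image of $\pbei{G}$ there contains no nonzero monomial. Set $R := \kk[\xx,\yy]/\mathfrak{m}_{\{i\}}$ and identify it with the polynomial ring in the indeterminates $\{x_j,y_j : j \in \vertices{G}\setminus\{i\}\}$, with quotient map $\pi$. Each generator $x_ix_j-y_iy_j$ of $\pbei{G}$ for which $\{i,j\}\in E(G)$ maps to $0$ under $\pi$, while generators supported on edges that avoid $i$ are preserved; consequently $\pi(\pbei{G})=\pbei{G_{\{i\}}}$. If $m \in \pbei{G}+\mathfrak{m}_{\{i\}}$ is a monomial not lying in $\mathfrak{m}_{\{i\}}$, then $\pi(m)$ is a nonzero monomial of $R$ contained in $\pbei{G_{\{i\}}}$, so it suffices to rule this out.

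For that I would apply the $\kk$-algebra homomorphism $\phi \colon R\to\kk$ defined by $\phi(x_j)=\phi(y_j)=1$ for every $j\in\vertices{G}\setminus\{i\}$. Each generator $x_jx_k-y_jy_k$ of $\pbei{G_{\{i\}}}$ is sent to $1-1=0$, so $\phi(\pbei{G_{\{i\}}})=0$; on the other hand $\phi$ sends every nonzero monomial to $1\neq 0$. This contradicts $\pi(m)\in\pbei{G_{\{i\}}}$ and hence forces $m\in\mathfrak{m}_{\{i\}}$.

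There is no real obstacle here. The argument is essentially the observation that a pure-difference binomial ideal in which both terms of every generator have the same total degree cannot contain a nonzero monomial, via the ``set every indeterminate to $1$'' trick. The only point requiring care is identifying the effect of modding out by $\mathfrak{m}_{\{i\}}$: it replaces $G$ by $G_{\{i\}}$, after which the evaluation argument applies uniformly in the quotient.
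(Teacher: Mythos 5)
Your proof is correct, and both of your steps check out: the generators of $\pbei{G}$ on edges meeting $i$ map to zero under your quotient map $\pi$, the remaining generators are exactly those of $\pbei{G_{\{i\}}}$, so indeed $\pi(\pbei{G})=\pbei{G_{\{i\}}}$; and a monomial outside $\mathfrak{m}_{\{i\}}$ survives as a nonzero monomial in the quotient, where the all-ones evaluation rules out membership in $\pbei{G_{\{i\}}}$. This is, however, a genuinely different route from the paper's. The paper stays in $\kk[\xx,\yy]$ and argues combinatorially: since $\pbei{G}$ is generated by pure differences it contains no monomials, so any monomial of $\pbei{G}+\mathfrak{m}_{\{i\}}$ is connected by term replacements along binomials of $\pbei{G}$ to a monomial of $\mathfrak{m}_{\{i\}}$, and by the multihomogeneity of Remark~\ref{r:multihomog} (with $\deg(x_j)=\deg(y_j)=e_j$) such replacements preserve the $e_i$-component of the multidegree, hence preserve membership in $\mathfrak{m}_{\{i\}}$. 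Your quotient-plus-evaluation argument is more elementary and self-contained, avoiding the paper's somewhat informal appeal to term replacements; note also that your identification $\pi(\pbei{G})=\pbei{G_{\{i\}}}$ is precisely the equality $\pbei{G}+\mathfrak{m}_{\{i\}}=\pbei{G_{\{i\}}}+\mathfrak{m}_{\{i\}}$ that the paper extracts separately from Remark~\ref{r:multihomog} at the start of the proof of Theorem~\ref{t:radical}, so you get that identity for free. What the paper's grading viewpoint buys is uniformity: the degree-preservation argument works verbatim with $\mathfrak{m}_S$ for arbitrary $S\subset\vertices{G}$, which is how the lemma is implicitly used in the proof of Theorem~\ref{t:meso} --- though your argument scales just as well (quotient by $\mathfrak{m}_S$ and replace $G_{\{i\}}$ by $G_S$). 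One cosmetic point: in your closing remark the hypothesis that both terms of each generator have the same total degree is superfluous --- evaluation at all ones annihilates every pure difference of monomials regardless of degrees.
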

\begin{proof}
Since it is generated by pure difference binomials, $\pbei{G}$ does
not contain any monomials.  Thus, any monomial in
$\pbei{G} + \mathfrak{m}_{\{i\}}$ is equivalent to one in
$\mathfrak{m}_{\{i\}}$ modulo term replacements using binomials in
$\pbei{G}$, but these do not change membership
in~$\mathfrak{m}_{\{i\}}$ by Remark~\ref{r:multihomog}.
\end{proof}

\begin{prop}\label{p:Intersection}
For any graph $G$,
$\pbei{G}=\sat{G}\cap\bigcap_{i\in\vertices{G}}(\pbei{G}+\mathfrak{m}_{\{i\}}).$
\end{prop}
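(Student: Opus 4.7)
My plan is to exploit the multigrading of Remark~\ref{r:multihomog}, with respect to which all ideals appearing in the statement are homogeneous, and to verify the equality one multidegree at a time. The inclusion $\pbei{G}\subseteq\sat{G}\cap\bigcap_i(\pbei{G}+\mathfrak{m}_{\{i\}})$ is automatic, so I focus on the reverse containment.

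Fix a multidegree $\dd$. The key observation is that a monomial of multidegree $\dd$ is divisible by $x_i$ or $y_i$ precisely when $d_i>0$. Hence $(\mathfrak{m}_{\{i\}})_\dd$ is either $0$ (when $d_i=0$) or equals the entire $\dd$-graded piece of $\kk[\xx,\yy]$ (when $d_i>0$). This splits the problem into two clean cases. If some $d_i$ vanishes, then $(\pbei{G}+\mathfrak{m}_{\{i\}})_\dd=(\pbei{G})_\dd$, so the right-hand side trivially reduces to $\pbei{G}$ in this degree. If every $d_i>0$, each $(\pbei{G}+\mathfrak{m}_{\{i\}})_\dd$ is the whole degree piece, the intersection collapses to $(\sat{G})_\dd$, and the task reduces to showing $(\sat{G})_\dd\subseteq(\pbei{G})_\dd$ under the assumption that $\dd$ is fully supported.

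For this fully supported case, I would invoke the Markov basis description from Proposition~\ref{p:markov} to expand $f\in(\sat{G})_\dd$ as a finite sum $\sum_k c_k m_k g_k$, where each $g_k$ is the walk binomial of some walk $P_k$ between vertices $i_k,j_k$ and $m_k$ is a monomial of multidegree $\dd-\deg(g_k)$. Since $\int(P_k)$ excludes the endpoints $i_k,j_k$ by convention, each interior vertex $l$ satisfies $\deg_l(m_k)=d_l>0$, so some $t_l\in\{x_l,y_l\}$ divides $m_k$. Choosing such a $t_l$ for each $l\in\int(P_k)$, Lemma~\ref{l:Walks} (together with Remark~\ref{r:oddCycle} for the generators coming from odd cycles) gives $g_k\prod_{l\in\int(P_k)}t_l\in\pbei{G}$, so that $m_kg_k=(m_k/\prod_l t_l)\cdot(g_k\prod_l t_l)\in\pbei{G}$. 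Summing yields $f\in\pbei{G}$.

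The only delicate point is the absorption step at the end: matching the ``interior monomial'' supplied by Lemma~\ref{l:Walks} into the multiplier $m_k$. This works smoothly precisely because the interior of a walk excludes its endpoints, so the positivity of every coordinate of $\dd$ leaves enough room in $m_k$ on every interior vertex to pick up the required factor $t_l$, with no dependence on the field or characteristic.
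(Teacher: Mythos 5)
Your proof is correct, and it takes a genuinely different route to the same core mechanism as the paper. The paper first invokes \cite[Corollary~1.5]{es96} to see that the right-hand side is again a binomial ideal, then analyzes an arbitrary binomial $b$ in the intersection via \cite[Proposition~1.10]{es96} and Lemma~\ref{l:bothMons}, and finally expands $b$ in Markov moves and removes them one at a time by a telescoping subtraction (``we can assume that one monomial of $b$ equals one of the monomials of the first term''), each removed term lying in $\pbei{G}$ by Lemma~\ref{l:Walks}. You instead observe that all ideals in sight are homogeneous for the multigrading of Remark~\ref{r:multihomog} and compare graded pieces: if $d_i=0$ for some $i$ then $(\mathfrak{m}_{\{i\}})_\dd=0$ and the claim is trivial in degree $\dd$, while if $\dd$ is fully supported then every $(\pbei{G}+\mathfrak{m}_{\{i\}})_\dd$ is the whole degree piece and the claim reduces to $(\sat{G})_\dd\subseteq(\pbei{G})_\dd$. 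From there the engine is the same as the paper's --- expand in the Markov basis of Proposition~\ref{p:markov} and absorb interior variables via Lemma~\ref{l:Walks} (with Remark~\ref{r:oddCycle} for the $x_i^2-y_i^2$ generators), using that the interior of a walk excludes its endpoints --- but full support of $\dd$ makes \emph{every} term $m_kg_k$ individually a member of $\pbei{G}$, so your version needs no iteration and no appeal to the general binomiality results of \cite{es96}; it is more elementary and visibly characteristic-free. Two small points worth making explicit in a write-up: first, $\sat{G}$ is itself multihomogeneous (its Markov generators have degree $e_i+e_j$, or note that saturating at the homogeneous element $\prod_i x_iy_i$ preserves homogeneity), which is what licenses writing $f=\sum_k c_k m_k g_k$ with $\deg(m_kg_k)=\dd$ for every $k$; second, the chosen variables $t_l$ for distinct interior vertices $l$ involve disjoint variable pairs, so their product genuinely divides $m_k$. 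Neither is a gap, just bookkeeping.
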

\begin{proof}
According to \cite[Corollary 1.5]{es96}, the intersection is binomial.
Let $b$ be any binomial in the intersection.  For each
$i\in\vertices{G}$, there are three cases: Either no term of $b$ is
individually contained in $\pbei{G} + \mathfrak{m}_{\{i\}}$, exactly
one is, or both are.  In the first case, \cite[Proposition~1.10]{es96}
implies $b \in \pbei{G}$.  In the second case, it implies that the
other monomial is contained in $\pbei{G}$, which is impossible.  Thus
it suffices to consider binomials $b$ both of whose monomials are
contained in $\pbei{G} + \mathfrak{m}_{\{i\}}$ for all
$i\in\vertices{G}$.  By Lemma~\ref{l:bothMons}, both monomials of $b$
are contained in $\mathfrak{m}_{\{i\}}$ for each $i\in\vertices{G}$.
Since $b\in\sat{G}$, there exist Markov moves
$m_{s_1t_1},\ldots,m_{s_rt_r}$ corresponding to
$(s_1,t_1),\ldots,(s_r,t_r)$-walks, respectively, such that
\[
b=x^{h_1}y^{h'_1}m_{s_1t_1}+\cdots+x^{h_r}y^{h'_r}m_{s_rt_r}
\]
with $h_i,h_i'\in\NN^n$.  We can assume that one monomial of $b$
equals one of the monomials of $x^{h_1}y^{h_1}m_{s_1t_1}$.  Thus both
monomials of $x^{h_1}y^{h_1}m_{s_1t_1}$ are divisible by at least one
indeterminate for each $i\in\vertices{G}$ and, by Lemma~\ref{l:Walks},
$x^{h_1}y^{h_1}m_{s_1t_1} \in \pbei{G}$.  Replacing $b$ by
$b-x^{h_1}y^{h_1}m_{s_1t_1}$ and iterating the argument eventually
yields $b\in\pbei{G}$.
\end{proof}

\begin{thm}\label{t:radical}
Let $G$ be a graph. If $\textnormal{char}(\kk)\neq 2$, then $\pbei{G}$
is a radical ideal.
\end{thm}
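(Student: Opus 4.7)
The plan is to prove radicality by induction on $|\vertices{G}|$ (allowing $G$ to be disconnected), with the key reduction furnished by Proposition \ref{p:Intersection}. The base case is graphs with no edges, where $\pbei{G} = (0)$ is trivially radical. For the inductive step I would invoke
$$\pbei{G} = \sat{G} \cap \bigcap_{i \in \vertices{G}} \bigl(\pbei{G} + \mathfrak{m}_{\{i\}}\bigr)$$
from Proposition \ref{p:Intersection}. Since an intersection of radical ideals is radical, it suffices to show each factor on the right is radical.

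For the factor $\sat{G}$, Proposition \ref{p:decompSat} gives radicality when $\charac(\kk) \neq 2$; this is the unique place the characteristic hypothesis enters. For each remaining factor I would note that every generator $x_i x_j - y_i y_j$ of $\pbei{G}$ with $j$ adjacent to $i$ already lies in $\mathfrak{m}_{\{i\}}$, so
$$\pbei{G} + \mathfrak{m}_{\{i\}} = \pbei{G_{\{i\}}} + \mathfrak{m}_{\{i\}}.$$
The induced subgraph $G_{\{i\}}$ has strictly fewer vertices than $G$, so by the induction hypothesis $\pbei{G_{\{i\}}}$ is radical in the subring $\kk[\xx \setminus \{x_i\}, \yy \setminus \{y_i\}]$. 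Since $\pbei{G_{\{i\}}}$ involves none of $x_i, y_i$, the natural projection yields an isomorphism
$$\kk[\xx,\yy]\bigm/\bigl(\pbei{G_{\{i\}}} + \mathfrak{m}_{\{i\}}\bigr) \;\cong\; \kk[\xx \setminus \{x_i\}, \yy \setminus \{y_i\}]\bigm/\pbei{G_{\{i\}}},$$
which is reduced by induction; hence $\pbei{G_{\{i\}}} + \mathfrak{m}_{\{i\}}$ is radical.

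All the serious work has already been done. The main obstacle was really concentrated in Proposition \ref{p:Intersection}, which sidesteps directly intersecting the minimal primes enumerated in Theorem \ref{t:MinimalPrimesPBEI}: that intersection is sensitive to $\charac(\kk)$ and would be painful to analyze component by component. An approach via a square-free initial ideal is blocked by Remark \ref{r:nosqfreeGB}, so the intersection-based induction appears to be the natural route, and the characteristic hypothesis is funneled cleanly through the single appeal to Proposition \ref{p:decompSat}.
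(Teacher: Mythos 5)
Your proposal is correct and takes essentially the same route as the paper's own proof: induction on $|\vertices{G}|$, reducing via Proposition~\ref{p:Intersection}, handling $\sat{G}$ by Proposition~\ref{p:decompSat}, and using the identity $\pbei{G}+\mathfrak{m}_{\{i\}}=\pbei{G_{\{i\}}}+\mathfrak{m}_{\{i\}}$ to apply the induction hypothesis. The only cosmetic difference is that you justify this identity directly from the generators (each $x_ix_j-y_iy_j$ with $j$ adjacent to $i$ lies in $\mathfrak{m}_{\{i\}}$), whereas the paper appeals to the multigrading of Remark~\ref{r:multihomog}.
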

\begin{proof}
The proof is by induction on the number of vertices of~$G$.  If $G$
has at most one vertex, then $\pbei{G}=0$ and the claim holds.
Proposition~\ref{p:decompSat} shows that
$\pbei{G}+\mathfrak{m}_{\{i\}}=\pbei{G_{\{i\}}}+\mathfrak{m}_{\{i\}}$ for all
$i\in\vertices{G}$.  Thus Theorem~\ref{p:Intersection} reads as
$\pbei{G}=\sat{G}\cap\bigcap_{i=1}^n(\pbei{G_{\{i\}}}+\mathfrak{m}_{\{i\}})$.
By the induction hypothesis, $\pbei{G_{\{i\}}}$ is radical and thus
$\pbei{G_{\{i\}}}+\mathfrak{m}_{\{i\}}$ is radical.
Proposition~\ref{p:decompSat} says that $\sat{G}$ is radical if
$\charac(\kk) \neq 2$ which yields the result.
\end{proof}

Theorem~\ref{t:charzwo} below contains a primary decomposition of
$\pbei{G}$ in the case $\charac(\kk) = 2$.  It uses the following
lemma, which allows to transport decompositions between different
characteristics.  Recall that the combinatorics of any binomial ideal
$I$ is encoded in its congruence $\til_I$ which identifies monomials
$m_1,m_2$, whenever $m_1-\lambda m_2 \in I$ for some
nonzero~$\lambda\in\kk$.  A binomial ideal is \emph{unital} if it is
generated by monomials and pure differences of monomials.  Then each
congruence is the congruence of a unital binomial ideal, though not
uniquely.
\begin{lemma}\label{l:unitalIntersect}
If a decomposition $I = J_1 \cap \dots \cap J_s$ of a unital binomial
ideal $I$ into unital binomial ideals $J_i$, $i=1,\dots,s$ is valid in
some characteristic, then it is valid in any characteristic.
\end{lemma}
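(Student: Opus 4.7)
The plan is to lift everything to $\ZZ$-coefficients and exploit the fact that unital binomial quotients are free as $\ZZ$-modules. For any unital binomial ideal $J\subseteq\kk[\xx]$, let $J_{\ZZ}\subseteq\ZZ[\xx]$ denote the unital binomial ideal with the same monomial and pure-difference generators. A key structural observation is that $J_{\ZZ}$ equals the $\ZZ$-span of its nil monomials together with the pure differences $m-m'$ for $m\sim_J m'$; consequently $\ZZ[\xx]/J_{\ZZ}$ is a free $\ZZ$-module with one basis element per non-nil congruence class of $\sim_J$, and in particular is $\ZZ$-flat. Hence $J = J_{\ZZ}\otimes_{\ZZ}\kk$ for every field $\kk$, independently of $\charac(\kk)$.

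Next I would show that intersection also commutes with base change. Let $\pi\colon \ZZ[\xx]\to \bigoplus_k \ZZ[\xx]/(J_k)_{\ZZ}$ be the natural map, with kernel $\bigcap_k (J_k)_{\ZZ}$ and image $N := \mathrm{Im}\,\pi$. Since $N$ is a submodule of a free $\ZZ$-module and $\ZZ$ is a PID, $N$ is itself free, hence flat. Tensoring the short exact sequence $0\to \bigcap_k(J_k)_{\ZZ}\to \ZZ[\xx]\to N\to 0$ with $\kk$ remains exact, and flatness of $N$ implies that $N\otimes_{\ZZ}\kk$ injects into $\bigoplus_k \kk[\xx]/J_k$. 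Combining these gives $\bigl(\bigcap_k (J_k)_{\ZZ}\bigr)\otimes_{\ZZ}\kk = \bigcap_k J_k$ in $\kk[\xx]$; this is the main technical step.

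Finally, the inclusion $I\subseteq J_k$ for unital binomial ideals is a combinatorial condition on the congruences (a monomial generator $m$ of $I$ lies in $J_k$ iff $m$ is $\sim_{J_k}$-nil, and a pure-difference generator $m_1-m_2$ of $I$ lies in $J_k$ iff $m_1\sim_{J_k} m_2$), hence is characteristic-independent; the hypothesis therefore yields $I_{\ZZ}\subseteq \bigcap_k (J_k)_{\ZZ}$. Let $M := \bigl(\bigcap_k (J_k)_{\ZZ}\bigr)/I_{\ZZ}$; this embeds into the free $\ZZ$-module $\ZZ[\xx]/I_{\ZZ}$, so $M$ is itself $\ZZ$-free. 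The decomposition $I = \bigcap_k J_k$ holds over $\kk$ if and only if $M\otimes_{\ZZ}\kk = 0$, and for a free $\ZZ$-module this is equivalent to $M = 0$, a condition independent of $\charac(\kk)$. Hence the hypothesis forces $M = 0$, and the decomposition holds in every characteristic.
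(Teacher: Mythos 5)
Your reduction to $\ZZ$ is set up correctly: for a unital binomial ideal $J$, the ideal $J_{\ZZ}$ is indeed the $\ZZ$-span of the nil monomials and the in-class differences, $\ZZ[\xx]/J_{\ZZ}$ is $\ZZ$-free on the non-nil classes, and $J=J_{\ZZ}\otimes_{\ZZ}\kk$; the characteristic-independence of the containments $I\subset J_k$ in your last paragraph is also fine. The proof fails at the ``main technical step'', in two ways. First, the justification is a non sequitur: for an inclusion $N\subset F$ of $\ZZ$-modules, flatness of $N$ does not make $N\otimes_{\ZZ}\kk\to F\otimes_{\ZZ}\kk$ injective; what is needed is vanishing of $\mathrm{Tor}_1^{\ZZ}(F/N,\kk)$, that is, absence of $p$-torsion in the \emph{cokernel} for $p=\charac(\kk)$. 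The inclusion $2\ZZ\subset\ZZ$ of free modules becomes the zero map after $\otimes_{\ZZ}\mathbb{F}_2$. Second, and fatally, the asserted identity $\bigl(\bigcap_k (J_k)_{\ZZ}\bigr)\otimes_{\ZZ}\kk=\bigcap_k J_k$ is false for unital binomial ideals once $s\ge 3$. Take $J_1=\ideal{y_0-y_1,\,y_1-y_2}$, $J_2=\ideal{x_0-x_1,\,x_1-x_2}$, and $J_3=\ideal{x_iy_j-x_ky_l: i+j\equiv k+l \bmod 3}$ in $\kk[x_0,x_1,x_2,y_0,y_1,y_2]$. All three are homogeneous for the bidegree (total $x$-degree, total $y$-degree), and for $f=\sum_{i,j}c_{ij}x_iy_j$ of bidegree $(1,1)$, membership in $J_1$, $J_2$, $J_3$ means vanishing of the row sums, the column sums, and the sums over the ``diagonals'' $i+j\equiv d \bmod 3$ of the $3\times 3$ matrix $(c_{ij})$. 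Over $\ZZ$ this kernel is the rank-two lattice of matrices $\alpha M_0+\beta M_1+\gamma M_2$ with $\alpha+\beta+\gamma=0$, where $M_c$ is the $0/1$ matrix supported on $\{j-i\equiv c\}$; in particular every element is constant on the classes $j-i\equiv c \bmod 3$. Over $\mathbb{F}_3$, however, the matrix $c_{ij}=i$ also satisfies all nine conditions, so $f=(x_1+2x_2)(y_0+y_1+y_2)$ lies in $J_1\cap J_2\cap J_3$ in characteristic three but not in $\bigl(\bigcap_k(J_k)_{\ZZ}\bigr)\otimes_{\ZZ}\mathbb{F}_3$, since $c_{ij}=i$ is not constant on $\{j-i\equiv 0\}$ modulo $3$. (For $s=2$ your claim does hold---the relevant $0/1$ condition matrices are incidence matrices of bipartite graphs, hence totally unimodular---but the paper applies the lemma with many intersectands.)

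The gap is not cosmetic, because the hypothesis enters your argument only through forcing $M=0$, i.e.\ $I_{\ZZ}=\bigcap_k(J_k)_{\ZZ}$, and that much is provable without the technical step (sandwich: $I_{\ZZ}\otimes\kk_0\subset\bigl(\bigcap_k(J_k)_{\ZZ}\bigr)\otimes\kk_0\subset\bigcap_k J_k=I=I_{\ZZ}\otimes\kk_0$); the direction you need in the new characteristic, namely $\bigcap_k J_k\subset\bigl(\bigcap_k(J_k)_{\ZZ}\bigr)\otimes\kk$, is exactly the false claim. What saves the lemma---and why the example above does not contradict it---is that the hypothesis constrains the triple: over $\QQ$ the intersection $J_1\cap J_2\cap J_3$ already contains the six-term polynomial $\sum_i x_iy_i-\sum_i x_iy_{i+1}$ (indices mod $3$), which is not in its binomial part since the common refinement of rows, columns and diagonals has singleton classes in bidegree $(1,1)$; hence this intersection is not unital and equals no unital $I$ in any characteristic. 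The paper exploits precisely this: from the hypothesis, the congruence of $I$ (a characteristic-free object) is the common refinement of the congruences of the $J_i$, and Theorem~9.12 of \cite{kahle11mesoprimary} then bounds the possible discrepancy between $I$ and $\bigcap_i J_i$ by monomials, which are excluded because unital ideals contain monomials only if monomials occur among their generators. The missing ingredient in your approach is thus control over the \emph{non-binomial} elements of the intersection; the module-theoretic bookkeeping over $\ZZ$ cannot detect them, so the argument as written cannot be repaired without importing this combinatorial input.
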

\begin{proof}
The congruence $\til_I$ induced by $I$ is the common refinement of the
congruences $\til_{J_i}$, induced by the $J_i$, $i=1,\dots, s$.  Thus,
in any characteristic, \cite[Theorem~9.12]{kahle11mesoprimary} implies
that $I$ and $J_1\cap \dots \cap J_s$ can only differ if one of them
contains monomials, but the other does not.  This cannot happen since
unital binomial ideals contain monomials if and only if they have
monomials among the generators.
\end{proof}

According to Example~\ref{e:NotAllDisconnectorsContributeMinP}, not
all disconnectors contribute minimal primes.  From
Definition~\ref{d:sign-split} it may seem that this is an arithmetic
effect.  It is not; the primary decomposition of $\pbei{G}$ in
characteristic two also witnesses it.  For the following definition,
recall that a hypergraph is \emph{$k$-colorable} if the vertices can be
colored with $k$ colors so that no edge is monochromatic.
\begin{defn}\label{d:effective}
Let $S \subset\vertices{G}$ be a disconnector, and let
$s_1,\ldots,s_r\in S$ be the vertices such that $\cC_{G_S}(s_i)$
consists exclusively of non-bipartite components of~$G_S$.  Let
$\mathcal{H}$ be the hypergraph whose vertex set consists of
the connected components $\cC_{G_S}(s_1)\cup\ldots\cup\cC_{G_S}(s_r)$
and with edge set $\{\cC_{G_S}(s_1),\ldots,\cC_{G_S}(s_r)\}$.  The
disconnector~$S$ is \emph{effective} if $\mathcal{H}$ is $2$-colorable.
\end{defn}

\begin{remark}
A disconnector is effective if and only if, in characteristic zero, it
admits sign-split minimal primes.
\end{remark}

\begin{thm}\label{t:charzwo}
Let $\mathcal{S}$ be the set of effective disconnectors of~$G$. Then
\begin{equation}\label{eq:mesoOrNot}
\pbei{G} = \bigcap_{S\in\mathcal{S}}\left(\mathfrak{m}_S +
  \sat{G_S}\right).
\end{equation}
If $\charac(\kk)=2$, then \eqref{eq:mesoOrNot} is a primary
decomposition of~$\pbei{G}$.
\end{thm}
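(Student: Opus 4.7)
The plan is to first establish the identity \eqref{eq:mesoOrNot} in characteristic zero, then propagate it to all characteristics using Lemma~\ref{l:unitalIntersect}. Both sides are unital binomial ideals (\pbei{G} has pure-difference generators, and each $\mathfrak{m}_S+\sat{G_S}$ is the sum of a monomial ideal and a pure-difference binomial ideal), so the lemma applies. Once the identity holds in all characteristics, the primary decomposition assertion in $\charac(\kk)=2$ will reduce to checking that each summand $\mathfrak{m}_S+\sat{G_S}$ is primary in characteristic two.

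The easy containment $\pbei{G}\subseteq\bigcap_{S\in\mathcal{S}}(\mathfrak{m}_S+\sat{G_S})$ follows from a check on generators: each $x_ix_j-y_iy_j$ either has an endpoint in $S$, so it lies in $\mathfrak{m}_S$, or has both endpoints outside $S$, in which case $\{i,j\}\in E(G_S)$ and the generator is a Markov move of $G_S$ and so lies in $\sat{G_S}$.

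For the reverse containment in $\charac(\kk)=0$, I would combine three ingredients: Theorem~\ref{t:radical} (radicality of $\pbei{G}$), Theorem~\ref{t:MinimalPrimesPBEI} (the minimal primes of $\pbei{G}$ are the ideals $\mathfrak{m}_S+\pp$ with $S\in\mathcal{S}$ and $\pp$ a sign-split minimal prime of $\sat{G_S}$), and Proposition~\ref{p:decompSat} ($\sat{G_S}$ equals the intersection of all its minimal primes $\pp$ in characteristic zero). The key step is the commutation
\[
\mathfrak{m}_S+\bigcap_{\pp}\pp=\bigcap_{\pp}(\mathfrak{m}_S+\pp),
\]
which holds because $\mathfrak{m}_S$ is supported in the indeterminates indexed by $S$ while each $\pp$ is supported in the indeterminates indexed by $\vertices{G_S}$; these supports are disjoint, so the ambient polynomial ring factors as a tensor product and the commutation follows from a direct computation with the quotient map. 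Then
\[
\bigcap_{S\in\mathcal{S}}(\mathfrak{m}_S+\sat{G_S})=\bigcap_{S\in\mathcal{S}}\bigcap_{\pp}(\mathfrak{m}_S+\pp)\subseteq\bigcap_{S\in\mathcal{S}}\bigcap_{\pp\text{ sign-split}}(\mathfrak{m}_S+\pp)=\pbei{G},
\]
where the inclusion comes from intersecting over a smaller family of primes on the right. Combined with the easy direction this yields the identity in characteristic zero, and Lemma~\ref{l:unitalIntersect} extends it to arbitrary characteristic.

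For the primary decomposition claim in $\charac(\kk)=2$, Proposition~\ref{p:decompSat} gives that each $\sat{G_S}$ is primary, and $\mathfrak{m}_S$ is primary as a monomial maximal ideal in $\kk[x_s,y_s:s\in S]$. Since these two summands are supported in disjoint sets of indeterminates, their sum in $\kk[\xx,\yy]$ remains primary by the standard tensor-product fact for primary ideals with disjoint supports, and \eqref{eq:mesoOrNot} becomes the desired primary decomposition. The main obstacle in the plan is the sum-intersection commutation used above, which fails for generic ideals and works here only because of the disjoint indeterminate supports between $\mathfrak{m}_S$ and the minimal primes of $\sat{G_S}$.
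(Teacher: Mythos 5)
Your proposal is correct and takes essentially the same route as the paper: equality in characteristic zero from Theorem~\ref{t:radical}, Theorem~\ref{t:MinimalPrimesPBEI}, and Proposition~\ref{p:decompSat} (you expand each $\mathfrak{m}_S+\sat{G_S}$ into minimal primes and pass to the sign-split subfamily, while the paper runs the identical comparison in the opposite direction by adjoining the non-sign-split primes to the intersection of minimal primes and then collapsing the inner intersection), followed by Lemma~\ref{l:unitalIntersect} to transport the unital identity to all characteristics and by Proposition~\ref{p:decompSat} for primariness when $\charac(\kk)=2$. One caution: the ``standard tensor-product fact'' that a sum of primary ideals in disjoint variables is primary is false in general over non-algebraically closed fields (e.g.\ $\ideal{x^2+1}+\ideal{y^2+1}$ in $\QQ[x,y]$ has a quotient isomorphic to $\QQ(i)\times\QQ(i)$), but your application is sound because one summand is $\mathfrak{m}_S$, so that $\kk[\xx,\yy]/(\mathfrak{m}_S+\sat{G_S})\iso\kk[x_i,y_i:i\notin S]/\sat{G_S}$ and primariness transfers directly.
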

\begin{proof}
For each disconnector $S\in\mathcal{S}$, let $B^S_1,\dots,B^S_{\bComp(G_S)}$ be the
bipartite components and $N^S_1,\dots,N^S_{\nbComp(G_S)}$ the
non-bipartite components of~$G_S$.  Let
$\Sigma^S \subset \{+,-\}^{\nbComp(G_S)}$ denote the set of sign
patterns that are sign-split.  In characteristic zero, by
Theorems~\ref{t:MinimalPrimesPBEI} and~\ref{t:radical}, $\pbei{G}$
decomposes as
\[
\pbei{G} = \bigcap_{S\in\mathcal{S}} \bigcap_{\sigma \in \Sigma^S}
\left(\mathfrak{m}_S + \sum_{i=1}^{\bComp(G_S)} \sat{B^S_i} +
  \sum_{i=1}^{\nbComp(G_S)} \pp^{\sigma_i}(N^S_i) \right).
\]
The intersection remains valid when intersecting over additional
ideals containing~$\pbei{G}$.  In particular, the sign-split
requirement can be dropped and $\Sigma^S$ replaced
by~$\{+,-\}^{\nbComp(G_S)}$.  Carrying out this inner intersection
yields the ideals $\mathfrak{m}_S+\sat{G_S}$ by
Proposition~\ref{p:decompSat}, and hence \eqref{eq:mesoOrNot} is valid
in characteristic zero.  Since all involved ideals are unital,
Lemma~\ref{l:unitalIntersect} yields that \eqref{eq:mesoOrNot} is
valid in any characteristic.  The ideals under consideration are
primary if $\charac(\kk)=2$ according to Proposition~\ref{p:decompSat}
and thus the second statement follows.
\end{proof}

The technique of adding ``phantom components'' to a primary
decomposition so that it faithfully exists over some other field (as
we did in the proof of Theorem~\ref{t:charzwo}) was mentioned in the
introduction of~\cite{kahle11mesoprimary} as one way to arrive at more
combinatorially accurate decompositions of binomial ideals.  The
upshot of the rest of the paper is that the primary decomposition in
characteristic two is an example of a mesoprimary decomposition.  This
means not only that all ideals in \eqref{eq:mesoOrNot} are
mesoprimary, but additionally each of the intersectands witnesses a
combinatorial feature of the graph that the binomials of~$\pbei{G}$
induce on the monomials of~$\kk[\xx,\yy]$.  Generally it can be quite
challenging to determine a mesoprimary decomposition, exactly because
of the stringent combinatorial conditions that it has to meet.  Here
it is mostly a translation of the (involved) definitions, essentially
because all ideals are unital and the ambient ring is the polynomial
ring~\cite[Remark~12.8]{kahle11mesoprimary}.  We refrain from
introducing too much of the machinery from~\cite{kahle11mesoprimary}
here, but do employ their notation.  In the following, we give explicit
references to all relevant definitions.

\begin{thm}\label{t:meso}
The decomposition \eqref{eq:mesoOrNot} is a mesoprimary decomposition
of~$\pbei{G}$.
\end{thm}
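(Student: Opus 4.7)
The plan is to invoke the characterization of mesoprimary decompositions from~\cite{kahle11mesoprimary}. Since all ideals appearing in~\eqref{eq:mesoOrNot} are unital binomial ideals in a polynomial ring, mesoprimariness is easy to verify: by~\cite[Remark~12.8]{kahle11mesoprimary}, a unital binomial ideal in $\kk[\xx,\yy]$ is mesoprimary exactly when it has the form $\mathfrak{m}_F + I_{\rho,L}$, where $F$ indexes a face of the ambient positive orthant and $I_{\rho,L}$ is the lattice ideal of a partial character $\rho$ on a sublattice $L$ of the group of differences of~$F$. Each intersectand $\mathfrak{m}_S+\sat{G_S}$ is visibly of this form, with face supported on $\vertices{G}\setminus S$, lattice $\mathcal{L}_{G_S}$, and trivial partial character.

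Next I would identify the associated mesoprime of each intersectand and check that they are pairwise distinct. Following the translation of key data in~\cite[Section~10]{kahle11mesoprimary}, the associated mesoprime of $\mathfrak{m}_S+\sat{G_S}$ is determined by the pair consisting of the face corresponding to $\vertices{G}\setminus S$ and the walk lattice $\mathcal{L}_{G_S}$; different effective disconnectors $S$ yield different faces and hence distinct mesoprimes, so the candidate decomposition~\eqref{eq:mesoOrNot} is automatically irredundant at the level of mesoprimes.

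The main step, and the one I expect to be the principal obstacle, is to verify that every associated mesoprime of $\pbei{G}$ appears among the intersectands. By~\cite[Theorem~13.3]{kahle11mesoprimary}, this is equivalent to showing that the congruence $\til_{\pbei{G}}$ on monomials of $\kk[\xx,\yy]$ is the common refinement of the congruences $\til_{\mathfrak{m}_S+\sat{G_S}}$ for $S\in\mathcal{S}$. Since all ideals involved are unital, Lemma~\ref{l:unitalIntersect} together with~\cite[Theorem~9.12]{kahle11mesoprimary} reduces the claim to an identification of congruences in characteristic two, where Theorem~\ref{t:charzwo} already realises~\eqref{eq:mesoOrNot} as a primary decomposition, and each primary component in that decomposition is in particular mesoprimary by the first paragraph.

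The subtle part, as anticipated, is to argue that precisely the \emph{effective} disconnectors contribute associated mesoprimes, so that no intersectand in~\eqref{eq:mesoOrNot} is combinatorially spurious. I expect this to follow from Definition~\ref{d:effective} together with the same walk-combinatorial argument that drove the proof of Theorem~\ref{t:MinimalPrimesPBEI}: an ineffective disconnector would demand a sign coincidence among non-bipartite components in $\cC_{G_S}(s)$ that the congruence $\til_{\pbei{G}}$ cannot witness, while every effective $S$ does produce a genuine monoid congruence witness (via an odd walk joining two components of opposite sign), which is exactly the combinatorial data attached to an associated mesoprime in~\cite{kahle11mesoprimary}.
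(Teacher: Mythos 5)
Your first step agrees with the paper's: each intersectand $\mathfrak{m}_S+\sat{G_S}$ is a lattice ideal plus the monomial maximal ideal in the complementary indeterminates, hence mesoprime and in particular mesoprimary (though your ``exactly when'' overstates \cite[Remark~12.8]{kahle11mesoprimary} --- that form characterizes \emph{mesoprime} unital ideals, not mesoprimary ones; as a sufficient condition it is fine). After that, however, you verify the wrong condition. By \cite[Definition~13.1]{kahle11mesoprimary}, a valid intersection $I=\bigcap_S J_S$ of mesoprimary ideals is a mesoprimary \emph{decomposition} precisely when, for each component $J_S$ with associated monoid prime $P_S$, the $P_S$-mesoprimes of $I$ and of $J_S$ agree at every cogenerator of~$J_S$ --- i.e., no component is spurious relative to the mesoprime structure of $I$ at its cogenerators. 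It does \emph{not} ask that every associated mesoprime of $I$ appear among the components, which is the statement you set out to prove; and the equivalence you attribute to \cite[Theorem~13.3]{kahle11mesoprimary} (associated mesoprimes all appear iff the congruence is the common refinement) is not what that theorem says --- it is an existence theorem for mesoprimary decompositions. Moreover, your reduction via Theorem~\ref{t:charzwo} conflates primary and mesoprimary decomposition: knowing that \eqref{eq:mesoOrNot} is a primary decomposition in characteristic two, with mesoprimary components, does not yield the cogenerator/mesoprime-agreement condition, so the reduction would not close the argument even if the congruence identification were carried out.

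The missing step is a concrete computation that the paper performs and your fourth paragraph only gestures at. Since $\mathfrak{m}_S\subset J_S$ and $J_S$ is mesoprime, the cogenerators of $J_S$ are exactly the monomials in the indeterminates $x_i,y_i$ with $i\notin S$, and the $P_S$-mesoprime of $J_S$ at any of them is $J_S$ itself. One must then show that the $P_S$-mesoprime of $\pbei{G}$ at any such cogenerator, namely
\begin{equation*}
\left(\pbei{G}+\mathfrak{m}_S\right):\Bigl(\prod_{i\notin S}x_iy_i\Bigr)^{\infty},
\end{equation*}
equals $\mathfrak{m}_S+\sat{G_S}=J_S$; this follows from the multigrading of Remark~\ref{r:multihomog} (which gives $\pbei{G}+\mathfrak{m}_S=\pbei{G_S}+\mathfrak{m}_S$, whose saturation has binomial part $\sat{G_S}$) together with Lemma~\ref{l:bothMons} to rule out extra monomials. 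Note also that effectiveness of the disconnectors plays no role in this verification --- it is already consumed by Theorem~\ref{t:charzwo}, which supplies the validity of the intersection \eqref{eq:mesoOrNot} in all characteristics --- so your attempt to re-derive the sign-split combinatorics inside the mesoprimary check is a red herring; likewise your distinctness-of-mesoprimes paragraph is not required by the definition. Your appeal to ``an odd walk joining two components of opposite sign'' as a congruence witness is a plausible heuristic but is never turned into the colon-ideal computation that the definition actually demands.
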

\begin{proof}
For $S\in\mathcal{S}$, let $J_S:=\mathfrak{m}_S+\sat{G_S}$ be the
intersectand corresponding to~$S$.  The ideal $J_S$ is
$P_S$-mesoprimary where $P_S \subset \NN^{2|\vertices{G}|}$ is the
monoid prime ideal~$\<e_{x_i},e_{y_i} : i \in S\>$.  In fact (like any
ideal that equals a lattice ideal plus monomials in a disjoint set of
indeterminates), $J_S$ is mesoprime
\cite[Definition~10.4]{kahle11mesoprimary} since it equals the kernel
of the monomial homomorphism
\[
\kk[\xx,\yy] \to \kk[x_i^\pm,y_i^\pm : i\notin S]/\sat{G_S} =
\kk[\ZZ^{2|\vertices{G} \setminus S|}/L_s]
\] which maps $x_i,y_i$ to zero if $i\in S$ and to their images in the
Laurent ring if $i\notin S$.  Here $L_S$ is the image in
$\ZZ^{2|\vertices{G} \setminus S|}$ of the adjacency matrix of~$G_S$.

According to \cite[Definition~13.1]{kahle11mesoprimary}, it remains to
show that at each cogenerator \cite[Definitions~7.1
and~12.16]{kahle11mesoprimary} of $J_S$, the $P_S$-mesoprimes of
$\pbei{G}$ and $J_S$ agree.  Let~$J_S^\pm$ be the image of $J_S$ in
$R^S = \kk[x_i,y_i, i\in S, x^\pm_j,y^\pm_j, j\notin S]$.  The
cogenerators of $J_S$ are monomials in $\kk[\xx,\yy]$ whose images in
$R_S/J_S^\pm$ are annihilated by
$\mathfrak{m}_S$.  Since~$J_S$ contains
$\mathfrak{m}_S$, the cogenerators are simply all monomials in the
indeterminates $x_i,y_i$ for $i\notin S$.  Now the $P_S$-mesoprime of the
mesoprime $J_S$ (at any monomial) is just~$J_S$.  Thus it remains to
compute the $P_S$-mesoprime of $\pbei{G}$ at any cogenerator.
Translating \cite[Definition~11.11]{kahle11mesoprimary} to
$\kk[\xx,\yy]$, this mesoprime is given by
$\left(\pbei{G} + \mathfrak{m}_S\right) : \left(\prod_{i\notin S}
x_iy_i\right)^\infty$.  The result now follows by Lemma~\ref{l:bothMons}.
\end{proof}

The stringent combinatorial conditions that guarantee a canonical
mesoprimary decomposition require additional knowledge about the
witness structure of~$\pbei{G}$.

\begin{conj}\label{c:better}
The mesoprimary decomposition in \eqref{eq:mesoOrNot} is combinatorial
and characteristic.
\end{conj}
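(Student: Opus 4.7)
The plan is to verify, in turn, the two properties stipulated in \cite[Sections~13--14]{kahle11mesoprimary}: that the decomposition in \eqref{eq:mesoOrNot} is \emph{combinatorial}, meaning each intersectand $J_S := \mathfrak{m}_S + \sat{G_S}$ is the mesoprime localization of $\pbei{G}$ at a canonical combinatorial witness, and that it is \emph{characteristic}, meaning these witnesses assemble into a witness array compatible with the poset of monoid primes $P_S = \langle e_{x_i},e_{y_i} : i \in S\rangle$ in $\NN^{2|\vertices{G}|}$.

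First, I would identify the key $P_S$-witnesses of $\pbei{G}$. A candidate witness is a monomial $m = \prod_{i \notin S} t_i$ with $t_i \in \{x_i,y_i\}$, chosen so that $\mathfrak{m}_S \cdot m \subseteq \pbei{G}$ while no smaller monoid prime annihilates the class of~$m$ in the quotient. Using Lemma~\ref{l:Walks}, each walk binomial inside $G_S$ forces an equivalence between admissible variable choices on a given connected component; using Lemma~\ref{l:bothMons} together with Proposition~\ref{p:SubgraphsAndSaturation}, membership of monomials in $\pbei{G} + \mathfrak{m}_S$ is controlled precisely by walks in $G_S$. I expect to show that consistent choices of the $t_i$ exist if and only if the hypergraph $\mathcal{H}$ of Definition~\ref{d:effective} attached to $S$ is $2$-colorable, i.e.\ if and only if $S$ is effective. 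This should yield a bijection between effective disconnectors and equivalence classes of key $P_S$-witnesses.

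Second, the computation already carried out in the proof of Theorem~\ref{t:meso} shows that the $P_S$-mesoprime of $\pbei{G}$ at any such witness equals~$J_S$, which would give combinatoriality of \eqref{eq:mesoOrNot}. Third, to establish the characteristic property, I would show that the witnesses constructed above fit into a coherent witness array: whenever $S \subsetneq T$ are effective disconnectors with $P_S \subset P_T$, the $2$-coloring of $\mathcal{H}$ chosen for $S$ should extend along the covering relation to one witnessing $T$, so that the associated key witnesses refine one another. An induction on $|S|$, tracking how the connected components of $G_S$ merge as vertices of $S$ are added back (cf.\ Definition~\ref{d:Disconnectors}), should provide the required compatibility.

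The main obstacle is the characteristic part. The combinatorial verification is largely a translation of the walk lemmas already at hand, but turning the non-bipartite sign-splitting into a witness-array statement seems to require a new structural lemma that precisely describes which sign assignments on non-bipartite components of $G_S$ are congruent modulo~$\pbei{G}$. The parity-dependent failure of $\sat{G}$ to be prime in the non-bipartite case (Remark~\ref{r:charRadical}) is exactly what forces the $2$-colorability hypothesis, and any residual flexibility in the congruence classes could in principle generate unanticipated key witnesses supported on non-effective disconnectors; ruling this out, and matching the resulting witness poset to the cover relations among effective disconnectors, is where I expect the real difficulty of the conjecture to lie.
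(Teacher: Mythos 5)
First, note that the paper does not prove this statement at all: it is stated as Conjecture~\ref{c:better}, and the authors explicitly say that a proof would require ``precise control over the various witnesses that contribute to coprincipal decompositions,'' offering only computational evidence afterwards. So there is no proof in the paper to compare against, and your proposal does not close the gap either: by your own account the ``characteristic'' half hinges on ``a new structural lemma'' that you do not supply, and the ``combinatorial'' half is a sequence of expectations (``I expect to show,'' ``should yield''). Moreover, the two properties to be verified are never pinned down: ``mesoprime localization at a canonical combinatorial witness'' and ``witness array compatible with the poset of monoid primes'' are informal glosses rather than the definitions in \cite{kahle11mesoprimary}, so even the assertion you set out to prove is left floating. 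A smaller but genuine slip: your candidate witnesses are monomials $m=\prod_{i\notin S}t_i$ with $\mathfrak{m}_S\cdot m\subseteq\pbei{G}$; since $\pbei{G}$ is generated by pure differences it contains no monomials at all (this is the first line of the proof of Lemma~\ref{l:bothMons}), so that condition is never satisfied --- witnesses must be defined through the congruence $\til_{\pbei{G}}$ and annihilators of classes in the quotient, not through ideal membership.

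Second, the centerpiece of your plan --- a bijection between effective disconnectors and equivalence classes of \emph{key} $P_S$-witnesses of $\pbei{G}$ --- is contradicted by the paper's own closing example, which is precisely the evidence the authors cite for the difficulty of the conjecture. For the path $G=1-2-3-4-5$, the set $\{2,4\}$ is an effective disconnector and $\mathfrak{m}_{\{2,4\}}$ is an intersectand of \eqref{eq:mesoOrNot}, yet the monoid prime $\langle e_{x_2},e_{y_2},e_{x_4},e_{y_4}\rangle$ contributes only \emph{non-key} witnesses (they are essential in the sense of \cite[Definition~12.1]{kahle11mesoprimary}, but not key); correspondingly, the intersection of the remaining four components already has the same binomials as $\pbei{G}$ and induces the same congruence, the component $\mathfrak{m}_{\{2,4\}}$ serving only to cut away a non-binomial polynomial. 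Hence any argument routed through key witnesses and \cite[Theorems~8.4 and~16.9]{kahle11mesoprimary}, as yours is, cannot by itself account for all components of \eqref{eq:mesoOrNot}; explaining how components supported at non-key witnesses fit the definitions of a combinatorial and characteristic decomposition is exactly the open problem, and your outline does not engage with it.
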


To prove Conjecture~\ref{c:better} one needs precise control over the
various witnesses that contribute to coprincipal
decompositions~\cite[Theorems~8.4 and~16.9]{kahle11mesoprimary}.
Experiments with \textsc{Macaulay2} indicate that the mesoprimary
decomposition of the congruence $\til_{\pbei{G}}$ differs
significantly from that of the ideal~$\pbei{G}$.  For example, if $G$
is a path $G = 1-2-3-4-5$, then $\pbei{G}$ has the following
mesoprimary decomposition:
\begin{align*}
  \pbei{G} = \sat{G} & \cap (\mathfrak{m}_{\{4\}} + \sat{1-2-3}) \cap
  (\mathfrak{m}_{\{2\}} + \sat{3-4-5}) \\ 
		     & \cap (\mathfrak{m}_{\{3\}} + \pbei{1-2} + \pbei{4-5}) \cap
           \mathfrak{m}_{\{2,4\}}.
\end{align*}
Intersecting all but the last ideal yields the ideal
\[
\pbei{G} + \ideal{x_1x_3y_3y_5 - x_1x_5y_3^2 - x_3^2 y_1y_5 +
x_3x_5y_1y_3}.
\]
This ideal has the same binomials as $\pbei{G}$ and thus induces the
same congruence.  The monomial ideal that was omitted does not
influence the congruence.  Its sole purpose is to cut away
non-binomials.  The monoid prime
$\<e_{x_2},e_{y_2},e_{x_4},e_{y_4}\> \subset \NN^{2|\vertices{G}|}$
contributes only non-key witnesses to the case.  Nevertheless these
witnesses are essential in the sense
of~\cite[Definition~12.1]{kahle11mesoprimary}.

\bibliographystyle{amsplain}
\bibliography{pbei}

\end{document}